\documentclass[12pt]{article}
\usepackage{amsmath,amsfonts,amssymb,amscd}
\usepackage{graphicx}

\newtheorem{theo}{Theorem}
\newtheorem{lem}{Lemma}[section]
\newtheorem{prop}{Proposition}[section]

\newtheorem{cor}{Corollary}[section]

\newtheorem{dfn}{Definition}[section]

\makeatletter \@addtoreset{equation}{section} \makeatother

\newcommand{\mC}{\mathbb{C}}

\newcommand{\mR}{\mathbb{R}}

\newcommand{\mZ}{\mathbb{Z}}
\newcommand{\mN}{\mathbb{N}}

\newcommand{\bJ}{{\bf J}}

\newcommand{\bL}{{\bf L}}

\newcommand{\bN}{{\bf N}}

\newcommand{\calA}{{\cal A}}
\newcommand{\calB}{{\cal B}}

\newcommand{\calD}{{\cal D}}
\newcommand{\calF}{{\cal F}}

\newcommand{\calT}{{\cal T}}

\newcommand{\eps}{\varepsilon}

\newcommand{\Ker}{\operatorname{Ker}}

\newcommand{\im}{\operatorname{Im}}

\newcommand\qed{{\unskip\nobreak\hfil\penalty50
  \hskip2em\hbox{}\nobreak\hfil\mbox{\rule{1ex}{1ex} \qquad}
    \parfillskip=0pt \finalhyphendemerits=0\par\medskip}}

\begin{document}

\title
{Combinatorics of Hamiltonian Normal Forms}
\author{D. Treschev \\
Steklov Mathematical Institute of Russian Academy of Sciences
\footnote{
This work was supported by the Russian Science Foundation under grant no.25-11-00114.}
}
\date{}
\maketitle

\begin{abstract}
We discuss algebraic and combinatorial aspects of the Hamiltonian normal form theory. The main objective is to describe the normal form near a singular point by an explicit formula in terms of the original Hamiltonian, avoiding the normalization procedure. In the case of one degree of freedom we compute the normal form as an explicit nonlinear functional, applied to the original Hamiltonian. In arbitrary dimension the corresponding formulas are more complicated but still explicit.
\end{abstract}

\section{Introduction}

Let $(\mC^{2n}, dy\wedge dx)$, $x = (x_1,\ldots,x_n)$, $y = (y_1,\ldots,y_n)$ be the phase space of the Hamiltonian system with Hamiltonian
\begin{equation}
\label{H*}
  H = H_2 + H_*, \qquad
  H_* = H_3 + H_4 + \ldots , \quad
  H_s = \sum_{|\alpha|+|\beta|=s} H_{\alpha \beta} x^\alpha y^\beta, \quad
  H_{\alpha\beta}\in\mC
\end{equation}
and singular point at the origin. As usual, for any multiindex $\alpha\in\mZ_+^n$ we denote
$$
  x^\alpha = x_1^{\alpha_1}\ldots x_n^{\alpha_n} \quad
  \mbox{and}\quad
  |\alpha|=\alpha_1+\ldots+\alpha_n.
$$

Below we also need the Poisson bracket $\{\,,\}$, associated with the symplectic structure $dy\wedge dx$. For any two smooth functions\footnote{or power series $F_1,F_2 = O_3(x,y)$}
$F_1 = F_1(x,y)$ and $F_2 = F_2(x,y)$ we have:
$$
  \{F_1,F_2\} = \sum_{j=1}^n \Big( \frac{\partial F_1}{\partial y_j} \frac{\partial F_2}{\partial x_j}
                               - \frac{\partial F_1}{\partial x_j} \frac{\partial F_2}{\partial y_j} \Big).
$$

We consider the system
\begin{equation}
\label{ham_eq}
  \dot x = \partial H/\partial y, \quad
  \dot y = - \partial H/\partial x.
\end{equation}
in the complex space because the theory of normal forms is, by its nature, complex. Here we mean that even for real systems the most convenient coordinates are in general complex. However there are standard ways to control reality of systems and transformations even in complex coordinates. We do not plan to discuss these details here.

The quadratic part $H_2$ is assumed to be ``semisimple'' i.e., the coordinates $x,y$ may be chosen such that
\begin{equation}
\label{semi}
  H_2 = \sum_{j=1}^n \lambda_j x_j y_j.
\end{equation}
Below we refer to these coordinates as LN (linearly normal) ones.

The series $H_* = O_3(x,y)$ is assumed to be formal: we do not discuss convergence. Let $\calF$ denote the vector space of such series. The infinite-dimensional vector space $\calF$ is naturally endowed with the product topology: a sequence $F^{(1)},F^{(2)},\ldots\in\calF$ is said to be convergent if for any $\alpha,\beta\in\mZ_+^n$ the sequence of Taylor coefficients $F^{(1)}_{\alpha,\beta},F^{(2)}_{\alpha,\beta},\ldots$ converges.
Although series from $\calF$ may be divergent, we will refer to them as functions.

Following Poincar\'e, Birkhoff \cite{Birk} proposed the idea that analytic form of the Hamiltonian (and therefore, of the corresponding Hamiltonian vector field) may be simplified by means of a formal symplectic change of coordinates
\begin{equation}
\label{xyXY}
   (x,y) \mapsto (X,Y) = (x,y) + O_2(x,y), \qquad
   dy\wedge dx = dY\wedge dX.
\end{equation}
Simplification means elimination of terms (as many as possible) in the expansion (\ref{H*}). The term $H_{\alpha\beta}x^\alpha y^\beta$ is said to be resonant if the scalar product $\langle\lambda,\beta-\alpha\rangle = 0$. According to the Birkhoff normal form theory there exists a change of variables (\ref{xyXY}) which reduces $H$ to\footnote{
Here we use the same notation $x,y$ (instead of $X,Y$) for the normal coordinates.}
\begin{equation}
\label{N*}
  N = H_2 + N_*,\qquad
  N_* = \sum_{\langle\lambda,\beta-\alpha\rangle = 0} N_{\alpha\beta} x^\alpha y^\beta
      = O_3(x,y)
\end{equation}
In the other words, expansion of $N_*$ contains only resonant terms.

The problem of convergence of normalization transformation as well as of the normal form is closely connected with the problem of local integrability of the system (\ref{ham_eq}).
Generically both the transformation and the normal form are presented by series with zero convergency radius. First results in this direction were obtained by Siegel \cite{Sie}. Resent noticeable progress in this domain is presented in the paper by Krikorian \cite{Krik} which contains both a nice survey of the convergence problem in the Hamiltonian normal form theory and profound original results.

In this paper we do not discuss convergence and concentrate on algebraic and combinatorial aspects of the normal form theory. It is known that the coefficients $N_{\alpha\beta}$ in (\ref{N*}) are polynomials in the coefficients $H_{\gamma\delta}$. We are interested in
\begin{itemize}
\item the structure of these polynomials (homogeneity, etc),
\item explicit formulas which compute the polynomials $N_{\alpha\beta}$ avoiding the normalization procedure.
\end{itemize}

\section{Main results}

\subsection{One degree of freedom}

In Section \ref{sec:1dof} we compute the normal form (\ref{N*}), in the case $n=1$ as an explicit nonlinear functional, acting on $H_*$. We define this functional, assuming $H_2 = \lambda xy$.
\begin{equation}
\label{aver}
  \mbox{For any}\quad G = \sum_{\alpha+\beta\ge 3} G_{\alpha\beta} x^\alpha y^\beta \in \calF \quad
  \mbox{we put}\quad
  \langle G\rangle = \sum_{\alpha\ge 2} G_{\alpha\alpha} w^\alpha.
\end{equation}

\begin{lem}
\label{lem:aver0}
Suppose $\lambda\in i\mR$. Then for any $G\in\calF$ the function $\langle G\rangle|_{w=xy}$ is the time average of $G$ w.r.t. the flow $\phi^t$ of the Hamiltonian $H_2$:
\begin{equation}
\label{time_aver0}
  \langle G\rangle = \frac\lambda{2\pi} \int_0^{2\pi / \lambda} G\circ\phi^t\, dt.
\end{equation}
\end{lem}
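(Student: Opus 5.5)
The plan is to compute the flow $\phi^t$ of $H_2=\lambda xy$ explicitly and then average termwise, monomial by monomial. With the paper's bracket conventions, the equations (\ref{ham_eq}) read
$$
\dot x = \partial H_2/\partial y = \lambda x, \qquad \dot y = -\partial H_2/\partial x = -\lambda y .
$$
Hence $\phi^t(x,y) = (e^{\lambda t}x,\, e^{-\lambda t}y)$, and the product $xy$ is invariant. Since $\lambda\in i\mR$ (and $\lambda\ne 0$, which is needed for the period $2\pi/\lambda$ to make sense), we have $e^{\lambda t}=e^{i\omega t}$ with $\omega=\lambda/i$. The flow is therefore periodic with period $2\pi i/\lambda$.

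For a single monomial $x^\alpha y^\beta$,
$$
x^\alpha y^\beta\circ\phi^t = e^{(\alpha-\beta)\lambda t}\, x^\alpha y^\beta .
$$
Integrating over one period kills every term with $\alpha\ne\beta$, because $\alpha-\beta$ is a nonzero integer and $e^{(\alpha-\beta)\lambda t}$ runs over a full number of circles. Terms with $\alpha=\beta$ are left unchanged, each equal to $(xy)^\alpha$. This leaves exactly the diagonal sum $\sum G_{\alpha\alpha}(xy)^\alpha$. The constraint $\alpha+\beta\ge3$ forces $\alpha\ge2$ on the diagonal, matching definition (\ref{aver}).

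To pass from monomials to a formal series $G\in\calF$, I interpret the average coefficientwise. Composition with $\phi^t$ acts diagonally on monomials and preserves each homogeneous degree, and each homogeneous piece $G_s$ is a polynomial. So the identity holds degree by degree, and the convergence in the product topology of $\calF$ is automatic.

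The only real obstacle is normalization, i.e.\ making the formula (\ref{time_aver0}) literally correct. Its upper limit $2\pi/\lambda$ is imaginary when $\lambda\in i\mR$, so I read the integral as a complex path integral along the segment from $0$ to $2\pi/\lambda$. In that case $\lambda t$ runs over $[0,2\pi]$ in real parameter $s=\lambda t$, with $dt=ds/\lambda$. Then
$$
\frac{\lambda}{2\pi}\int_0^{2\pi/\lambda} e^{(\alpha-\beta)\lambda t}\,dt
= \frac1{2\pi}\int_0^{2\pi} e^{(\alpha-\beta)s}\,ds = \delta_{\alpha\beta}
$$
for the appropriate branch. The exponent $(\alpha-\beta)s$ should carry the factor $i$: with $\lambda=i\omega$ one has $e^{(\alpha-\beta)i\omega t}$, and integrating over real $t\in[0,2\pi/|\omega|]$ gives $\delta_{\alpha\beta}$ after multiplying by $|\omega|/2\pi$. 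So I would state the convention explicitly, namely averaging over one real period of the periodic flow, and check that the prefactor makes the average of $1$ equal to $1$. That settles (\ref{time_aver0}) with $w=xy$.
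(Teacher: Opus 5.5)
Your argument is the paper's own: compute $\phi^t(x,y)=(e^{\lambda t}x,e^{-\lambda t}y)$, observe that $x^\alpha y^\beta\circ\phi^t=e^{(\alpha-\beta)\lambda t}x^\alpha y^\beta$, and average over a period so that only the diagonal terms $\alpha=\beta$ survive. Your degree-by-degree remark for formal series is a harmless addition.

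One correction to your normalization paragraph. The literal segment integral gives $\frac{1}{2\pi}\int_0^{2\pi}e^{(\alpha-\beta)s}\,ds=\frac{e^{2\pi(\alpha-\beta)}-1}{2\pi(\alpha-\beta)}\neq 0$ for $\alpha\neq\beta$. No choice of branch changes this, since the integrand is entire. So (\ref{time_aver0}) must be read as $\frac{\lambda}{2\pi i}\int_0^{2\pi i/\lambda}G\circ\phi^t\,dt$, the average over the real period, which is exactly the convention you end up adopting.
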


{\it Proof}. The equation $\phi^t(x,y) = (e^{\lambda t} x, e^{-\lambda t} y)$ implies
$(x^\alpha y^\beta)\circ\phi^t
      = e^{(\alpha-\beta)\lambda t} x^\alpha y^\beta$. Now (\ref{time_aver0}) follows. \qed

We associate with any $H_*\in\calF$ the power series\footnote{
It may seem that it is more natural to regard $H_*$ as an argument of the operator $S$: all operations in the r.-h.s. of (\ref{1dof}) are performed with $H_*$.  We use instead
$H = H_2 + H_*$. This is not very essential because $H_2$ is always the same. However the notation we use makes some equations (for example (\ref{sym-inv})) simpler.}
in the scalar variable $w$
\begin{equation}
\label{1dof}
  S[H] = \sum_{m=1}^\infty  \frac{(-1)^{m-1}}{\lambda^{m-1} m!} \partial_w^{m-1} \langle H_*^m\rangle.
\end{equation}

Hence $S[H] = \sum_{j=2}^\infty S_j w^j$, in particular
$$
  S_2 = H_{22} - \frac3\lambda (H_{03}H_{30} + H_{12}H_{21}).
$$

\begin{theo}
\label{theo:NF_1dof}
Let $N(x,y) = \nu(xy)$ be the normal form of the Hamiltonian (\ref{H*}):
$$
  \nu(z) = \lambda z + N_*(z),\qquad
  N_*(z) = N_2 z^2 + N_3 z^3 + \ldots
$$
Then $w\mapsto \big(w - S[H_*](w/\lambda)\big) / \lambda$ is the function inverse to $\nu$.
\end{theo}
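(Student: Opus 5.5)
The idea is to attach to $H$ a quantity that does not change under symplectic changes of coordinates (\ref{xyXY}): the \emph{action} of the invariant curve $\{H=h\}$ near the origin, viewed as a function of the energy $h$. I will compute this action in two ways. For the normal form $N=\nu(xy)$ the curve $\{N=h\}$ is $\{xy=\nu^{-1}(h)\}$, and in the variable $w=xy$ (with an angle $\varphi$ conjugate to it under the flow of $H_2$) its action is just $\nu^{-1}(h)$. For $H$ itself I expect the action to equal $\big(h-S[H](h/\lambda)\big)/\lambda$. Equating the two expressions gives the theorem.

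\textbf{Step 1 (reduction to a convenient setting).} All coefficients of $\nu^{-1}$ and of $S[H]$ up to a fixed order are polynomials in finitely many $H_{\alpha\beta}$ and in $\lambda^{-1}$. So it suffices to prove the identity for $\lambda\in i\mR$ and for polynomial $H_*$; analytic continuation in $\lambda$ and polynomial identity in the coefficients then give the general case. In this setting Lemma \ref{lem:aver0} lets me interpret $\langle\cdot\rangle$ as averaging over the circle flow $\phi^t$ of $H_2$. I will use the coordinates $(w,\varphi)$ in which $dy\wedge dx$ becomes a constant multiple of $dw\wedge d\varphi$. For any $G$ this gives
$$
\int G\,\theta(h-\lambda w) \;\propto\; \int_0^{h/\lambda}\langle G\rangle(w)\,dw ,
$$
understood as a formal (residue-type) identity; here $\theta$ is the Heaviside step function.

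\textbf{Step 2 (the action of $H$).} I write the enclosed area as $V(h)=\int\theta(h-H_2-H_*)$ and expand it formally in $H_*$:
$$
V(h)=\sum_{m\ge0}\frac{(-1)^m}{m!}\,\partial_h^m\!\int H_*^m\,\theta(h-\lambda w).
$$
By Step 1, the $m$-th term equals $\frac{(-1)^m}{m!\lambda^m}\partial_w^{m-1}\langle H_*^m\rangle\big|_{w=h/\lambda}$ for $m\ge1$. The $m=0$ term gives $h/\lambda$. Summing, and comparing with (\ref{1dof}), I get $V(h)=\big(h-S[H](h/\lambda)\big)/\lambda$, up to the common normalising constant. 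The same computation applied to $N$ gives $V(h)=\nu^{-1}(h)$ directly, because $N$ depends only on $w$.

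\textbf{Step 3 (invariance), which I expect to be the main obstacle.} I must justify that $V$ is unchanged under formal symplectic maps of the form (\ref{xyXY}), in a way that matches the formal manipulations above. Two options:
\begin{itemize}
\item Argue with genuine integrals. For polynomial $H$ truncated at a high order and small real $h$, $V(h)$ is a true symplectic area, hence invariant under the convergent truncated normalizing map. The discrepancy from truncation only affects terms of high order in $h$.
\item Stay purely algebraic. Show that the functional $H\mapsto \big(h-S[H](h/\lambda)\big)/\lambda$ is unchanged along $H\mapsto H\circ\exp(\eps X_F)$. This amounts to checking that the derivative in $\eps$ vanishes, i.e.\ that $\sum_m \frac{(-1)^m}{m!\lambda^m}\partial_w^{m-1}\langle H_*^{m-1}\{F,H\}\rangle=0$. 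I expect this to follow from $\langle\{F,G\}\rangle$-type integration by parts: averages of Poisson brackets with $H_2$ vanish.
\end{itemize}
I would try the first option first, as it is the more direct one, and keep the algebraic check as a fallback.
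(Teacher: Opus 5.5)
Your main line (Option~1) is correct, but it takes a genuinely different route from the paper's.

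The paper stays entirely formal. It proves the $Sym$-invariance of $S$ (Theorem~\ref{theo:1dof}) by a first-order computation in $\eps$, writing the generator as $F=\calA F'+\calB F''$. The $\calA F'$ part drops out. For the $\calB F''$ part, the identity of Lemma~\ref{lem:1dof}, $\langle\{\calB G',G''\}\rangle=-\lambda^{-1}\partial_w(\langle G'G''\rangle-\langle G'\rangle\langle G''\rangle)$, makes the contribution $-m\langle F''H_*^{m-1}\rangle$ (coming from $\{F,H_2\}$ in the $m$-th term) cancel the $\{F,H_*\}$ contribution of the $(m-1)$-st term; $\langle F''\rangle=0$ kills the first term. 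The paper then computes $S[\nu(xy)]$ directly from (\ref{1dof}) and matches it with the Lagrange--B\"urmann coefficients of $\nu^{-1}$.

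You instead recognize $(h-S[H](h/\lambda))/\lambda$ as the formal action, i.e.\ $\frac1{2\pi}$ times the area of $\{H\le h\}$. Your Heaviside computation, including signs and powers of $\lambda$, is right. Invariance then becomes area preservation, and the link with $\nu^{-1}$ is automatic. For $N=\nu(w)$ the $\theta$-expansion is Lagrange inversion in disguise, so you never need Lagrange--B\"urmann explicitly. Your approach explains what $S$ actually is. The paper's works verbatim for every $\lambda\ne0$ and arbitrary formal series, with no analytic input and no density argument.

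Several points need to be made explicit for your route to be complete.

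\emph{Reality condition.} In Step~1, $\lambda\in i\mR$ and polynomial $H_*$ are not enough: for arbitrary complex coefficients the set $\{H\le h\}$ means nothing. You must also impose, e.g., $H_{\alpha\beta}=-\overline{H_{\beta\alpha}}$, so that $H/\lambda$ is real on the slice $y=\bar x$ and $h\in\lambda\mR_{>0}$. Such coefficients form the fixed set of an antilinear involution, hence a totally real subspace of maximal dimension. A polynomial identity valid there therefore holds for all complex coefficients, and the normalization procedure preserves this class, so the normalizing map is real.

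\emph{Validity of the expansion.} You need to show that the formal $\theta$-expansion is the genuine asymptotic expansion of the area. Rescaling $(x,y)=\sqrt h\,\zeta$ turns it into a regular perturbation in $\sqrt h$.

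\emph{Truncation.} A truncated normalizing map is not symplectic. Its Jacobian is $1+O(|z|^{M})$ when truncated at degree $M$, and this is what makes the truncation error high order.

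\emph{Option~2.} This is essentially the paper's proof, but as sketched it would stall. The terms $\langle H_*^{m-1}\{F,H_2\}\rangle=-\langle H_*^{m-1}F''\rangle$ do not vanish. Cancelling them requires Lemma~\ref{lem:1dof}, not just $\langle\{H_2,\cdot\}\rangle=0$. Your variation also drops the factor $m$ that comes from differentiating $H_*^m$.
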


Theorem \ref{theo:NF_1dof} implies that
$$
  N_2 = S_2, \quad
  N_3 = S_3 + \frac{2S_2^2}{\lambda}, \quad
  N_4 = S_4 + \frac{5S_2 S_3}{\lambda} + \frac{5S_2^3}{\lambda^2}, \quad
  \ldots
$$
The general formula is
\begin{equation}
\label{Nm}
    N_m
  = \sum_{\alpha_2+2\alpha_3+3\alpha_4+\ldots = m-1} \frac{(m-1+|\alpha|)!}{\lambda^{|\alpha|-m} \alpha! m!}
                       S_2^{\alpha_2} S_3^{\alpha_3} \cdots
\end{equation}
Here $\alpha = (\alpha_2,\alpha_3,\ldots)\in\mZ_+^\infty$, $\alpha! = \alpha_2! \alpha_3! \cdots$,
$|\alpha| = \alpha_2 + \alpha_3 + \ldots$  The summation condition $\alpha_2+2\alpha_3+3\alpha_4+\ldots = m-1$ implies that all the coefficients $\alpha_s$ vanish for any $s > m$. Equation (\ref{Nm}) is presented as a conjecture in \cite{Tre_RMS26}

Proof of Theorem \ref{theo:NF_1dof} is based on the invariance of the nonlinear operator $S$ w.r.t. the group of formal symplectic transformations $(x,y)\mapsto (x,y) + O_2(x,y)$ (Theorem \ref{theo:1dof}).

\subsection{Structure of the coefficients $N_{\alpha\beta}$}

\begin{dfn}
\label{dfn:hom}
For any $(\alpha,\beta)\in\mZ_+^{2n}$ we put $\delta(\alpha,\beta) = \beta - \alpha \in\mZ^n$.
For any monomial $M = c\prod_{j=1}^s H_{\alpha^{(j)}\beta^{(j)}}$ we define its degree and weight:
$$
  \deg(M) = s, \quad
  w(M) = \sum_{j=1}^s (\alpha^{(j)}, \beta^{(j)}) \in\mZ_+^{2n}.
$$
\end{dfn}

Let
$$
  H_2 + N_*, \qquad
  N_* = \sum_{\langle\lambda,\beta-\alpha\rangle = 0,\, |\alpha| + |\beta| \ge 3}
                       N_{\alpha\beta} x^\alpha y^\beta
$$
be the normal form of the Hamiltonian (\ref{H*}).

\begin{theo}
\label{theo:N}
For any $(\alpha,\beta)\in\mZ_+^{2n}$, $\langle\lambda,\beta-\alpha\rangle = 0$ the coefficient $N_{\alpha\beta}$ is a polynomial in the variables $H_{\alpha'\beta'}$: $N_{\alpha\beta} = \sum_M c_M M$, where any monomial $M$ in this sum satisfies the equations
\begin{eqnarray}
\nonumber
&  1\le \deg(M) \le |\alpha| + |\beta| - 2, & \\
\label{wdeltaT}
& w(M) - (\alpha,\beta) = T \in \mZ_+^{2n}, \quad
  \delta T = 0, \quad
  |T| = 2\deg(M) - 2. &
\end{eqnarray}
\end{theo}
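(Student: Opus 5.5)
The plan is to fix a concrete normalization procedure under which every coefficient of the normal form is visibly a finite linear combination of iterated Poisson brackets of monomials of $H_*$. Then (\ref{wdeltaT}) will follow from elementary bookkeeping of exponents. I would use the Lie-series (Deprit) normalization. At order $s$ one takes a generating Hamiltonian $W_s$ solving the homological equation $\{H_2,W_s\} = R_s - \langle R_s\rangle_{\rm res}$. Here $R_s$ is the current degree-$s$ part, and $\langle\cdot\rangle_{\rm res}$ keeps only resonant monomials. One then replaces $H$ by $\exp(\mathrm{ad}_{W_s})H$. In LN coordinates, $\{H_2, x^\alpha y^\beta\} = \pm\langle\lambda,\beta-\alpha\rangle x^\alpha y^\beta$. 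So solving the homological equation just divides each nonresonant monomial by a nonzero scalar, and projecting to resonant terms just deletes monomials. Neither operation changes the exponent of a monomial, and neither changes the degree in the $H_{\gamma\delta}$ of its coefficient. By induction on $s$, every coefficient of $W_s$ and of the transformed Hamiltonian is therefore a polynomial in the $H_{\gamma\delta}$. Each monomial $M$ in such a polynomial arises from a nested bracket expression in which $\deg(M)$ monomials $x^{\alpha^{(j)}}y^{\beta^{(j)}}$ of $H_*$ are combined by exactly $\deg(M)-1$ Poisson brackets, with scalar factors in between.

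The key exponent computation is the following. The bracket $\{x^{\alpha'}y^{\beta'},x^{\alpha''}y^{\beta''}\}$ is a linear combination over $j$ of the monomials $x^{\alpha'+\alpha''-e_j}y^{\beta'+\beta''-e_j}$. Hence each bracket subtracts a pair $(e_j,e_j)$ from the sum of exponents. I would make this precise by induction on the nesting depth, with the following claim. Every monomial $x^\alpha y^\beta$ carrying coefficient $M$ satisfies $(\alpha,\beta) = w(M) - T$, where $T = \sum_{k=1}^{\deg(M)-1}(e_{j_k},e_{j_k})$. For such a $T$ one has $T\in\mZ_+^{2n}$ automatically, $\delta T = 0$, and $|T| = 2\deg(M)-2$. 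This is exactly (\ref{wdeltaT}). A useful side remark is that the same conclusions can be cross-checked by symmetry. The symplectic scalings $x_j\mapsto a_jx_j$, $y_j\mapsto a_j^{-1}y_j$ preserve $H_2$ and give $\delta T=0$. The conformal rescaling $H\mapsto c^{-2}H(cx,cy)$ gives $|T| = 2\deg(M)-2$. But nonnegativity of $T$ really needs the bracket argument.

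The degree bound then follows. Every monomial of $H_*$ has $|\alpha^{(j)}|+|\beta^{(j)}|\ge 3$, so with $s = \deg(M)$ we get $|\alpha|+|\beta| = |w(M)| - |T| \ge 3s - (2s-2) = s+2$. This gives $\deg(M)\le|\alpha|+|\beta|-2$. The lower bound $\deg(M)\ge1$ holds because there is no constant term: $N_{\alpha\beta}=0$ when $H_*=0$.

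The main obstacle I expect is the careful formulation of the inductive invariant. It must also cover the composite terms $\exp(\mathrm{ad}_{W_s})$ applied to the already-transformed Hamiltonian, where the $W_s$ themselves are built from earlier brackets. I would state the invariant as ``degree $d$ polynomial coefficients correspond to exactly $d-1$ brackets''. This invariant is multiplicative under brackets, since degrees add and bracket counts add plus one, and it is preserved by the linear scalar operations. With that formulation the induction is straightforward.
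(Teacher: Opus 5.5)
Your argument is essentially the paper's proof. Lemma \ref{lem:homo} runs the same induction over the normalization recursion (in the form (\ref{Lambda}) for the $s$-linear forms $\Lambda_s$), using that $\calA$, $\calB$ and $\{H_2,\cdot\}$ act diagonally on monomials while each Poisson bracket of two monomials lowers the total exponent by some $(e_j,e_j)$; the bound $\deg(M)\le|\alpha|+|\beta|-2$ comes from the same count $|w(M)|\ge 3\deg(M)$, recorded there as the range $s\le m-2$ in (\ref{PhiLam}).

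One adjustment is needed. Apply your invariant to the paper's single-generator recursion (\ref{lin2})--(\ref{lin3}) with $\hat F_m=0$, where it goes through verbatim, rather than to successive Lie transforms $\exp(\mathrm{ad}_{W_s})$. In resonant cases the two schemes need not yield the same normal form: already at degree $6$ they can differ by $\tfrac12\{\calA\{W_4,W_3\},\calA H_3\}$. So as written you prove the statement for a possibly different $N$.
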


Given $(\alpha,\beta)\in\mZ_+^{2n}$ the number of monomials $M$, satisfying (\ref{wdeltaT}), is finite. The coefficients $c_M$ are functions of $\lambda$ and $\alpha^{(1)},\beta^{(1)},\ldots,\alpha^{(s)},\beta^{(s)}$.

We prove Theorem \ref{theo:N} in Section \ref{sec:forms}.

\subsection{Three operators}

The operator $S = S[H]$ includes the operations of multiplication, averaging $\langle\cdot\rangle$, and differentiation.
If $n>1$, the normal form is presented by a more complicated formula, which includes Poisson brackets instead of multiplication. To present the result, assuming that $H_2$ is semisimple (see (\ref{semi})), we define the following operators $\calA,\calB,\calD$ on $\calF$. We put
$$
   \calF \ni F \mapsto \calD F := \{H_2,F\}.
$$

\begin{prop}
\label{prop:D}
Suppose $H_2$ is semisimple. Then $\calF = \Ker\calD\oplus \im\calD$.
\end{prop}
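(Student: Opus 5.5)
In LN coordinates the operator $\calD$ is diagonal on monomials, and the whole statement reduces to this observation. Since $H_2=\sum_j\lambda_j x_jy_j$, we have $\partial H_2/\partial y_j=\lambda_j x_j$ and $\partial H_2/\partial x_j=\lambda_j y_j$. Hence for a monomial
$$
  \{H_2, x^\alpha y^\beta\} = \sum_j\Big(\lambda_j x_j\,\alpha_j \frac{x^\alpha y^\beta}{x_j} - \lambda_j y_j\,\beta_j\frac{x^\alpha y^\beta}{y_j}\Big) = \langle\lambda,\alpha-\beta\rangle\, x^\alpha y^\beta .
$$
So $\calD$ acts on $F=\sum F_{\alpha\beta}x^\alpha y^\beta$ coefficientwise, multiplying $F_{\alpha\beta}$ by $\mu_{\alpha\beta}:=-\langle\lambda,\delta(\alpha,\beta)\rangle$. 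In particular $\calD$ preserves each finite-dimensional space of homogeneous polynomials of degree $s\ge 3$. It is also well defined on formal series, because each Taylor coefficient of $\calD F$ depends on a single coefficient of $F$.

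Next I identify the two subspaces. Let $R=\{(\alpha,\beta):\langle\lambda,\beta-\alpha\rangle=0\}$ be the set of resonant exponents. $\Ker\calD$ consists exactly of the series supported on $R$, since $\calD F=0$ if and only if $\mu_{\alpha\beta}F_{\alpha\beta}=0$ for all $(\alpha,\beta)$. I claim $\im\calD$ consists exactly of the series supported on the complement of $R$. One inclusion is clear: $\calD F$ has zero coefficients at resonant exponents. For the converse, take $G$ supported off $R$ and define $F_{\alpha\beta}=G_{\alpha\beta}/\mu_{\alpha\beta}$ for $(\alpha,\beta)\notin R$, and $F_{\alpha\beta}=0$ otherwise. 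This is a legitimate element of $\calF$ because no convergence is required, and $\calD F=G$.

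Finally, every $F\in\calF$ splits uniquely as $F=F^{res}+F^{nr}$ by separating resonant and nonresonant monomials. Both parts are again formal series of order $\ge3$. The two subspaces intersect only in $0$ because their supports are disjoint. This gives $\calF=\Ker\calD\oplus\im\calD$, and the decomposition is topological in the product topology, since the projections act coefficientwise.

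No step is a real obstacle. The only points to watch are that the computation must be done in LN coordinates where (\ref{semi}) holds, and that no small-divisor issue arises: we divide individual coefficients by the nonzero numbers $\mu_{\alpha\beta}$ and never ask for convergence. If the statement is meant coordinate-free, one adds that $\calD$, $\Ker$ and $\im$ are defined invariantly, so the splitting proved in LN coordinates holds in general.
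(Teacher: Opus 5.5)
Your proof is correct and is essentially the paper's argument. In LN coordinates $\calD$ multiplies each monomial $x^\alpha y^\beta$ by $\langle\alpha-\beta,\lambda\rangle$, so $\Ker\calD$ and $\im\calD$ are the series supported on resonant and nonresonant exponents, respectively. You additionally spell out details the paper leaves implicit: the explicit preimage $F_{\alpha\beta}=G_{\alpha\beta}/\langle\alpha-\beta,\lambda\rangle$ showing that $\im\calD$ contains every nonresonant series, and the disjointness of the two supports.
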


{\it Proof}. Let $F = \sum_{\alpha,\beta} f_{\alpha,\beta} x^\alpha y^\beta$ and let $\langle\,,\rangle$ be the standard inner product in $\mC^n$. Then in LN coordinates
\begin{eqnarray*}
      \calD F
  &=& \sum_{\alpha,\beta} \langle\alpha - \beta,\lambda\rangle f_{\alpha,\beta} x^\alpha y^\beta, \\
      \Ker\calD
  &=& \Big\{\calF\ni F = \sum_{\langle\alpha - \beta,\lambda\rangle=0} f_{\alpha,\beta} x^\alpha y^\beta \Big\}, \\
      \im\calD
  &=& \Big\{\calF\ni F = \sum_{\langle\alpha - \beta,\lambda\rangle\ne 0} f_{\alpha,\beta} x^\alpha y^\beta \Big\}.
\end{eqnarray*}
\qed

\begin{cor}
The operator $\calD|_{\im\calD}$ is invertible: there exists an operator $\calB$ on $\calF$ such that
$$
  \calB F_1 = 0\quad\mbox{for any $F_1\in\Ker\calD$}\quad
  \mbox{and}\quad
  \calD\calB F_2 = \calB\calD F_2 = F_2\quad
  \mbox{for any $F_2\in\im\calD$}.
$$
In LN coordinates
$$
           \calF\ni F = \sum_{\alpha,\beta} f_{\alpha,\beta} x^\alpha y^\beta
  \mapsto  \calB F = \sum_{\langle\alpha - \beta,\lambda\rangle\ne0}
                                  \frac{f_{\alpha,\beta}}
                                       {\langle\alpha - \beta,\lambda\rangle} x^\alpha y^\beta.
$$
\end{cor}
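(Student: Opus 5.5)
The corollary follows directly from the decomposition in Proposition \ref{prop:D}. I would define $\calB$ explicitly in LN coordinates by the displayed formula and then check the required identities coefficientwise.

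First, $\calB$ is a well-defined operator on $\calF$. It acts diagonally on monomials, multiplying $x^\alpha y^\beta$ either by $0$ (resonant case) or by $1/\langle\alpha-\beta,\lambda\rangle$ (nonresonant case). It therefore maps a formal series $O_3(x,y)$ to a formal series $O_3(x,y)$. It is also continuous in the product topology, since each Taylor coefficient of $\calB F$ depends only on the corresponding coefficient of $F$.

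Second, I verify the properties using the coordinate descriptions from the proof of Proposition \ref{prop:D}. In those coordinates $\calD$ multiplies the coefficient $f_{\alpha,\beta}$ by $\langle\alpha-\beta,\lambda\rangle$.
\begin{itemize}
\item If $F_1\in\Ker\calD$, only resonant monomials occur in $F_1$. Each of them is sent to zero by $\calB$, so $\calB F_1=0$.
\item If $F_2\in\im\calD$, only nonresonant monomials occur in $F_2$. On each such monomial, $\calD\calB$ and $\calB\calD$ both act as multiplication by $\langle\alpha-\beta,\lambda\rangle\cdot\langle\alpha-\beta,\lambda\rangle^{-1}=1$, so $\calD\calB F_2=\calB\calD F_2=F_2$.
\end{itemize}
In particular, $\calD|_{\im\calD}$ is invertible with inverse $\calB|_{\im\calD}$. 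Note also that $\calD$ maps $\im\calD$ into itself, because it preserves the set of monomials on which it is nonzero.

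The only point needing care, though it is mild, is whether the statement is intrinsic or tied to LN coordinates. The splitting $\calF=\Ker\calD\oplus\im\calD$ is coordinate-free, and the conditions on $\calB$ (zero on $\Ker\calD$, inverse of $\calD$ on $\im\calD$) determine $\calB$ uniquely by linearity. So $\calB$ is intrinsically the operator $\calB=(\calD|_{\im\calD})^{-1}\circ P$, where $P$ is the projection onto $\im\calD$ along $\Ker\calD$. The LN formula is simply its coordinate expression. Existence comes from the explicit formula, and uniqueness from the direct sum decomposition.
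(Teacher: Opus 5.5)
Your proposal is correct and follows the paper's route. The paper gives no separate argument: the corollary is read off from the diagonal description of $\calD$ in LN coordinates in the proof of Proposition \ref{prop:D}, with $\calB$ defined by the displayed formula and the identities checked monomial by monomial, exactly as you do. Your added remark is also correct but not needed: the conditions determine $\calB$ uniquely, so the LN formula is just the coordinate form of $(\calD|_{\im\calD})^{-1}$ composed with the projection onto $\im\calD$ along $\Ker\calD$.
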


We also define the averaging operator $\calA = \calA^2 : \calF \to \Ker\calD$ such that
$$
  \calA|_{\im\calD} = 0\quad
  \mbox{and}\quad
  \calA|_{\Ker\calD} = I|_{\Ker\calD},
$$
where $I$ is the identity operator. In LN coordinates
$$
           \calF\ni F = \sum f_{\alpha,\beta} x^\alpha y^\beta
  \mapsto  \calA F = \sum_{\langle\alpha - \beta,\lambda\rangle = 0}
                                  f_{\alpha,\beta} x^\alpha y^\beta.
$$

\begin{lem}
\label{lem:aver}
Suppose $\lambda_1,\ldots,\lambda_n\in i\mR$. Then for any $F\in\calF$ the function $\calA F$ coincides with the time average of $F$ w.r.t. the flow $\phi^t$ of the Hamiltonian $H_2$:
\begin{equation}
\label{time_aver}
  \calA F = \lim_{T\to +\infty} \frac1T \int_0^T F\circ\phi^t\, dt.
\end{equation}
The limit is taken in the product topology.
\end{lem}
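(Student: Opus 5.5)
We work in LN coordinates and compute the time average monomial by monomial, as in the proof of Lemma \ref{lem:aver0}. The flow of $H_2=\sum_j\lambda_j x_jy_j$ is $\phi^t(x,y)=(e^{\lambda_1 t}x_1,\ldots,e^{\lambda_n t}x_n,e^{-\lambda_1 t}y_1,\ldots,e^{-\lambda_n t}y_n)$. For a monomial this gives
$$
  (x^\alpha y^\beta)\circ\phi^t = e^{\langle\alpha-\beta,\lambda\rangle t}x^\alpha y^\beta .
$$
Composition with $\phi^t$ therefore acts diagonally on monomials. For each fixed $(\alpha,\beta)$, the Taylor coefficient of $\frac1T\int_0^T F\circ\phi^t\,dt$ at $x^\alpha y^\beta$ equals
$$
  f_{\alpha,\beta}\,\frac1T\int_0^T e^{\langle\alpha-\beta,\lambda\rangle t}\,dt .
$$
No interchange of sum and integral is needed. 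The integral is taken coefficientwise, which is the natural meaning for formal series, and only one coefficient enters for each monomial.

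We then evaluate this scalar limit. Put $\mu=\langle\alpha-\beta,\lambda\rangle$. Since all $\lambda_j\in i\mR$, we have $\mu\in i\mR$.

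\emph{Case $\mu=0$.} The integrand equals $1$, so the average is $f_{\alpha,\beta}$ for every $T$.

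\emph{Case $\mu=i\omega\ne0$, $\omega\in\mR$.} The average is $\dfrac{e^{i\omega T}-1}{i\omega T}$. Its modulus is at most $2/(|\omega|T)$, so it tends to $0$ as $T\to+\infty$.

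Hence the limit coefficient is $f_{\alpha,\beta}$ exactly when $\langle\alpha-\beta,\lambda\rangle=0$, and $0$ otherwise. By the LN-coordinate formula for $\calA$ given before the statement, these are precisely the Taylor coefficients of $\calA F$.

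Convergence in the product topology means coefficientwise convergence, which is what we have shown, so the identity (\ref{time_aver}) follows. The only point requiring care is the assumption $\lambda\in i\mR^n$. It guarantees that $|e^{\mu t}|=1$, so the nonresonant averages decay. For eigenvalues with nonzero real part the averages could blow up, which is why the hypothesis is needed. I expect no real obstacle beyond stating clearly that the integral of a formal series is understood coefficientwise.
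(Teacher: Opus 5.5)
Your proof is correct and is the paper's argument: in LN coordinates the flow acts diagonally, $(x^\alpha y^\beta)\circ\phi^t = e^{\langle\alpha-\beta,\lambda\rangle t}x^\alpha y^\beta$, and $\langle\alpha-\beta,\lambda\rangle\in i\mR$ makes the nonresonant coefficient averages tend to zero. You also spell out the scalar limit and the coefficientwise meaning of the integral in the product topology, which the paper leaves implicit.
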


{\it Proof}. In LN coordinates
$$
    \phi^t(x_1,\ldots,x_n,y_1,\ldots,y_n)
  = (e^{\lambda_1 t} x_1,\ldots,e^{\lambda_n t} x_n,e^{-\lambda_1 t}y_1,\ldots,e^{-\lambda_n t}y_n).
$$
Then $(x^\alpha y^\beta)\circ\phi^t
      = e^{\langle\alpha-\beta,\lambda\rangle t} x^\alpha y^\beta$. Since
$\langle\alpha-\beta,\lambda\rangle\in i\mR$, we obtain (\ref{time_aver}). \qed

\subsection{Computation of the normal form}

Let $N_* = N_3 + N_4 + \ldots$, where $N_m$ is a homogeneous\footnote{of degree $m$} in $x$ and $y$ form. In Sections \ref{sec:NP}, \ref{sec:ex}, and \ref{sec:forms} we prove that
\begin{equation}
\label{Nm=sumsum}
    N_m
  = \sum_{s=1}^{m-2} \sum_{j_1+\ldots+j_s = m-2+2s} \calA \Lambda_s(H_{j_1},\ldots,H_{j_s}).
\end{equation}
Here $\Lambda_s$ are $s$-linear forms on $\calF$ while $H_m$ are homogeneous forms from (\ref{H*}). It is interesting that the forms $\Lambda_s$ do not depend on $m$.

The forms $\Lambda_s$ are composed of the operators $\calB$ and Poisson brackets. For example,
\begin{eqnarray*}
     \Lambda_1(H_j)
 &=& H_j, \\
     \Lambda_2(H_{j_1},H_{j_2})
 &=& \frac12 \{\calB H_{j_1},H_{j_2}\} + \frac12 \{\calB H_{j_1},\calA H_{j_2}\}, \\
     \Lambda_3(H_{j_1},H_{j_2},H_{j_3})
 &=& \frac14 \{\calB\{\calB H_{j_1},H_{j_2}\},H_{j_3}\}
    + \frac1{12} \{\calB H_{j_1},\{\calB H_{j_2},H_{j_3}\}\} \\
 && + \frac14 \{\calB\{\calB H_{j_1},\calA H_{j_2}\},H_{j_3}\}
    + \frac14 \{\calB\{\calB H_{j_1},H_{j_2}\},\calA H_{j_3}\} \\
 && - \frac1{12} \{\calB H_{j_1},\{\calB H_{j_2},\calA H_{j_3}\}\}
    + \frac14 \{\calB H_{j_1},\calA\{\calB H_{j_2},H_{j_3}\}\} \\
 && + \frac14 \{\calB\{\calB H_{j_1},\calA H_{j_2}\},\calA H_{j_3}\}
    + \frac14 \{\calB H_{j_1},\calA\{\calB H_{j_2},\calA H_{j_3}\}\} .
\end{eqnarray*}
An explicit general formula for $\Lambda_s$ is obtained in Section \ref{sec:cont}. To present this formula, we need some notation. In particular, we have to talk a little about marked full binary trees.

\subsection{Marked full binary trees and operator $Q$}
\label{sec:Q}

Recall that a full binary tree (FBT) is a tree in which every node (vertex) has either 0 or 2 children. Nodes without children are called leaves. The leaves are assumed to be ordered: we will draw them on a horizontal line. We will draw branches of the trees parallel to the slash $\slash$ or backslash $\backslash$, and call them slash or backslash branches. As an example we may take the FBT
\begin{equation}
\label{tree}
  t =
  \begin{picture}(60,22)
            \put(0,0){\line(1,1){30}}
            \put(20,0){\line(1,1){10}}
            \put(40,0){\line(-1,1){20}}
            \put(60,0){\line(-1,1){30}}
            \end{picture} .
\end{equation}
It has 4 leaves and 7 vertices (including the leaves).

Let $\calT_s$ be the set of all FBT's with $s$ leaves. For any $t\in\calT_s$ the number of vertices in the graph $t$ equals $2s-1$. The number of FBT's with $s+1$ leaves is the $s$-th Catalan number: $\#\calT_{s+1} = \frac{(2s)!}{(s+1)! s!}$.

Any vertex of $t\in\calT_s$ lies on one of $s$ backslash lines $l_1,\ldots,l_s$, passing through the leaves. For example, for the tree (\ref{tree}) $l_1,l_2,l_3$ and $l_4$ contain one, one, three, and two vertices correspondingly.
  
If some vertices (not leaves) of a FBT are supplied with a mark $\circ$, the tree becomes a marked FBT (MFBT). For example,
\begin{equation}
\label{mfbt}
    \begin{picture}(80,32)
            \put(0,0){\line(1,1){40}}
            \put(20,0){\line(-1,1){10}}
            \put(40,0){\line(1,1){20}}
            \put(60,0){\line(1,1){10}}
            \put(80,0){\line(-1,1){40}}
            \put(60,20){$\circ$}
            \put(42,38){$\circ$}
            \end{picture} \; .
\end{equation}
The set of MFBT's with $m$ leaves is denoted $\calT^\circ_m$. We put
$\calT^\circ = \cup_{m=0}^\infty \calT_m^\circ$.
\smallskip

We associate any marked vertex with a pair of neighboring leaves. For the tree (\ref{mfbt}) the upper marked vertex is associated with the second and third leaves (counting from the left) while the lower one with the third and forth leaves. The formal definition is as follows.
\smallskip

\begin{dfn}
\label{dfn:pair}
Let $v$ be a marked vertex in the tree $t$. Children of $v$ are roots of two nonintersecting subtrees $t_1$ and $t_2$ in $t$. We associate $v$ with the unique neighboring (in $t$) pair of leaves $v_1\in t_1$ and
$v_2\in t_2$.
\end{dfn}

We associate with any $t\in\calT^\circ_m$ the $m$-linear operator $Q [t]$ on $\calF$ constructed in the following way.
\smallskip

(1) We put the functions $G_1,\ldots,G_m$ (arguments of the operator) at the leaves ($G_1$ at the first leaf, $G_2$ at the second, etc.).

(2) We replace the branches by figure brackets: a slash branch by $\{$ and a backslash branch by $\}$.

(3) We put to the right from each left bracket the operator $\calB$.

(4) For any marked vertex $v$ let $v_j$ and $v_{j+1}$, $j\in \{1,\ldots,m-1\}$ be the leaves\footnote{$j$ and $j+1$ are their numbers counting from the left} associated with $v$. Then we put the operator $\calA$ after the comma, situated between $G_j$ and $G_{j+1}$.
\medskip

For example, if $t$ is the tree (\ref{mfbt}) then
$$
    Q[t](G_1,\ldots,G_5)
  = \{\calB\{\calB G_1,G_2\},\calA\{\calB G_3, \calA \{\calB G_4,G_5\}\}\}.
$$

\begin{theo}
\label{theo:NF_ndof}
For any $s\ge 1$
\begin{equation}
\label{Lam=muQ}
  \Lambda_s = \sum_{t\in\calT_s^\circ} \mu_t Q[t].
\end{equation}
\end{theo}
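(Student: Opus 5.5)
The plan is to obtain the $\Lambda_s$ from a normalization procedure in which nested brackets appear naturally, namely continuous averaging (a Lie-type flow on $\calF$). Consider the family $H_\delta = H_2 + H_{*,\delta}$, $\delta\ge 0$, $H_{*,0}=H_*$, defined by
$$
  \partial_\delta H_\delta = \{\calB H_{*,\delta}, H_\delta\}.
$$
Each $H_\delta$ is obtained from $H$ by a formal symplectic change of the form (\ref{xyXY}), since it is the time-$\delta$ shift along a Hamiltonian vector field with Hamiltonian $O_3$. The linear part of the evolution is $\{\calB H_*, H_2\} = -\calD\calB H_* = -(I-\calA)H_*$. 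Hence the nonresonant part of $H_{*,\delta}$ is damped like $e^{-\delta}$ to leading order, and the limit $H_\infty = \lim_{\delta\to\infty} H_\delta$ (in the product topology, degree by degree) lies in $H_2+\Ker\calD$. So $H_\infty$ is a normal form. By uniqueness of the normal form's coefficients as given in Theorem \ref{theo:N} (or by the standard uniqueness up to the choice of normalization), it yields $N_*$. I would then check degree by degree that the limit exists, and that $N_* = \calA H_{*,\infty}$.

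Next I expand $H_{*,\delta} = \sum_{s\ge1}\Phi_s(\delta)$, where $\Phi_s(\delta)$ is $s$-linear in $H_*$. Using $\calB\calA=0$, the recursion is
$$
  \partial_\delta \Phi_s = -(I-\calA)\Phi_s + \sum_{p+q=s}\{\calB\Phi_p,\Phi_q\},\qquad \Phi_1(0)=H_*,\ \Phi_s(0)=0\ (s>1).
$$
I solve it by variation of constants, separately on $\Ker\calD$ (no damping) and on $\im\calD$ (damping $e^{-\delta}$). In each bracket $\{\calB\Phi_p,\Phi_q\}$ I split the right argument as $\Phi_q = \calA\Phi_q + (I-\calA)\Phi_q$, and then rewrite $(I-\calA)\Phi_q$ as the full argument minus the $\calA$-part. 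By induction on $s$ this shows the following. The quantity $\Phi_s(\delta)$ is a sum over $t\in\calT^\circ_s$ of $c_t(\delta)\,Q[t](H_*,\ldots,H_*)$, possibly with an outer $\calA$ or $I-\calA$, where the coefficients $c_t(\delta)$ are finite combinations of $\delta^k e^{-l\delta}$.

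In this induction each tree node corresponds to one bracket $\{\calB\,\cdot\,,\cdot\}$; this explains step (3) in the definition of $Q[t]$, where $\calB$ is placed after each left bracket. The marks $\circ$ record where $\calA$ was inserted in a right argument, which gives step (4). Left arguments never carry $\calA$ because $\calB\calA=0$. The recursion for $c_t$ follows the decomposition $t=(t_1,t_2)$ at the root, with root marked or unmarked, and with an integral in $\delta$. I then define
$$
  \mu_t = \lim_{\delta\to\infty} c_t^{\calA}(\delta),
$$
where $c_t^{\calA}$ is the coefficient of the $\calA$-component. Finally I collect homogeneous parts. Substituting $H_*=\sum_j H_j$, the form $Q[t](H_{j_1},\ldots,H_{j_s})$ has degree $\sum j_i - 2(s-1)$, since each of the $s-1$ brackets lowers the degree by $2$. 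This reproduces the summation condition $j_1+\ldots+j_s=m-2+2s$ in (\ref{Nm=sumsum}), and gives $\Lambda_s=\sum_t\mu_tQ[t]$ with $\mu_t$ independent of $m$. As a sanity check, the computation should reproduce the listed $\Lambda_2$ and $\Lambda_3$, for example the coefficients $\tfrac12,\tfrac14,\pm\tfrac1{12}$.

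The main obstacle is the existence of the limits $\mu_t$, i.e. excluding secular growth in the $\Ker\calD$-component. The $\calA$-part of $\Phi_s$ is the integral of $\calA\sum\{\calB\Phi_p,\Phi_q\}$, and $\calB\Phi_p = \calB(I-\calA)\Phi_p$. I will show inductively that $(I-\calA)\Phi_p$ has coefficients $O(\delta^k e^{-\delta})$ while $\calA\Phi_q$ stays bounded. Then every integrand decays exponentially and the integrals converge, and the $\im\calD$-components also tend to zero. A secondary difficulty is bookkeeping: the same $Q[t]$ may arise from several histories, so the inductive statement should be formulated directly as a recursion on $c_t$ over root decompositions. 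That recursion, evaluated at $\delta=\infty$, is then the explicit formula for $\mu_t$.
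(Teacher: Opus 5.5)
Your argument establishes, at best, a fact about a different object. The forms $\Lambda_s$ are \emph{defined} by the recursion (\ref{Lambda}). That recursion encodes the Lie-series normalization with a single generator $F=\sum_m\calB\Phi_m$, i.e.\ the choice $\hat F_m=0$ in (\ref{lin3}). The theorem is an identity between these $s$-linear operators, nonresonant part included, with the specific coefficients fixed by (\ref{mu}), (\ref{mu+}) (equivalently Proposition \ref{prop:mu=FF}).

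The limit of your continuous averaging cannot detect this, for three reasons.
\begin{itemize}
\item Its nonresonant part tends to zero, so it says nothing about $(I-\calA)\Lambda_s$, which is nonzero already for $s=2$.
\item $\calD$ is a derivation of the bracket, so $\{\im\calD,\Ker\calD\}\subset\im\calD$, hence $\calA Q[t'\circ t'']=0$. Every tree with a marked root is therefore invisible in the $\calA$-component. Your limits $\lim c_t^{\calA}$ carry no information about $\mu_{t_2}=\frac12$ or about the coefficients $\frac14$ of $t_5,t_6,t_9,t_{10}$; whatever your bookkeeping assigns to such trees is an artifact.
\item You never show that your limits satisfy (\ref{mu}), (\ref{mu+}).
\end{itemize}

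More seriously, even the $\calA$-part cannot be transferred. The step ``by uniqueness of the normal form'' is false for resonant $n\ge2$, and Theorem \ref{theo:N} asserts no uniqueness. The time-$\infty$ map of your flow is $e^{\{F^{\mathrm{eff}},\cdot\}}$. Here $F^{\mathrm{eff}}=\int_0^\infty G+\frac12\iint_{\delta_2<\delta_1}\{G(\delta_1),G(\delta_2)\}+\ldots$ is the Magnus series with $G(\delta)=\calB H_{*,\delta}$, and its commutator terms acquire resonant components. One checks that $F^{\mathrm{eff}}_3=F_3$ and $F^{\mathrm{eff}}_4=F_4$, while
$K_5:=\calA F^{\mathrm{eff}}_5=\calA\big(-\frac14\{\calB H_3,\calB\{\calB H_3,\calA H_3\}\}-\frac1{12}\{\calB H_3,\calB\{\calB H_3,(I-\calA)H_3\}\}\big)$.
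It follows that $N_6^{\mathrm{CA}}-N_6=\{K_5,\calA H_3\}$. For $\lambda=(2,1)$ and $H=H_2+x_1y_2^2+x_2^3+y_2^3$ this gives $K_5=-\frac13x_1x_2y_2^3$ and $N_6^{\mathrm{CA}}-N_6=\frac23x_1^2y_2^4\neq0$. Consequently the coefficients produced by continuous averaging must differ from the paper's $\mu_t$ for some $t\in\calT_4^\circ$.

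The missing idea is to work with (\ref{Lambda}) directly, as the paper does:
\begin{itemize}
\item substitute the inductive representation of each $\Lambda_{s_j}$;
\item turn every nested bracket into $Q$ of a right factorization via (\ref{homom}), (\ref{homom1});
\item collect coefficients using uniqueness of the basic right factorization (Lemma \ref{lem:right+}), which yields (\ref{mu}), (\ref{mu+});
\item solve these with the generating functions $\bJ$, $\bL$ to obtain the Bernoulli formulas.
\end{itemize}
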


Now we explain how to compute the numbers $\mu_t$. We associate with any $t\in\calT_s^\circ$ its backslash code
$b(t)$. Let $V_k = (v_{k1},\ldots,v_{k j_k})$ be the ordered collection of vertices of $t$ lying on the $k$-th backslash line $l_k$. We assume that these vertices are presented in the natural order: from left to right. Let $s_{k1}< \ldots <s_{k i_k}$ be the numbers of {\it marked} vertices in this collection. We associate with $V_k$ the code $(c_{k1},\ldots,c_{k i_k+1})$, where
$$
  c_{k1} = s_{k1}, \quad
  c_{k2} = s_{k2} - s_{k1}, \quad \ldots, \quad
  c_{k i_k} = s_{k i_k} - s_{k i_k-1}, \quad
  c_{k i_k+1} = j_k - s_{k i_k}.
$$
By definition backslash code of $t$ has the form
\begin{equation}
\label{code}
  b(t) = (c_{11},\ldots,c_{1 i_1+1})
         (c_{21},\ldots,c_{2 i_2+1}) \ldots
         (c_{m1},\ldots,c_{m i_m+1}) .
\end{equation}

{\bf Example}.
$$
    t =
    \begin{picture}(100,40)
    \put(0,0){\line(1,1){50}}
    \put(20,0){\line(1,1){10}}
    \put(40,0){\line(-1,1){20}}
    \put(60,0){\line(1,1){20}}
    \put(80,0){\line(1,1){10}}
    \put(100,0){\line(-1,1){50}}
    \put(14,20){$\circ$}
    \put(80,20){$\circ$}
    \put(90,10){$\circ$}
    \end{picture}
    \qquad  \Rightarrow \qquad
      b(t) = (1)(1)(1,2)(1)(1)(2,1,1) .
$$

We prove (Proposition \ref{prop:mu=FF}) that for $t\in\calT^\circ$ with backslash code (\ref{code})
\begin{eqnarray*}
&   \mu_t
  = L_{c_{11}} \ldots L_{c_{1 i_1}} J_{c_{1 i_1+1}}
    L_{c_{21}} \ldots L_{c_{2 i_2}} J_{c_{2 i_2+1}}
     \;\ldots\;
    L_{c_{m1}} \ldots L_{c_{m i_m}} J_{c_{m i_m+1}}, & \\[1mm]
&   L_m = - B_m / m!,\quad  J_{m+1} = (-1)^m B_m / m!, &
\end{eqnarray*}
where $B_m$ is the $m$-th Bernoulli number.

Equations (\ref{Nm=sumsum}) and (\ref{Lam=muQ}) compute any homogeneous form $N_m$ in terms of the forms $H_3,\ldots,H_m$.
\smallskip

Theorem \ref{theo:NF_ndof} should have natural straightforward generalizations to the case of general vector fields and even to the noncommutative (quantum) case. One just must replace the Poisson brackets by vector or quantum commutators and construct analogs of the operators $\calA$, $\calB$, and $\calD$.

Note that computations of the normal form avoiding the normalization procedure exist in the literature. First we mention the Ecalle's armould calculus \cite{Eca92}. In \cite{FMS25} normalization of a vector field in a neighborhood of a singular point is constructed following ideas of \cite{Eca92}. In this way a quantitative version of the Bruno-R\"ussmann theorem\footnote{which says that in the nonresonant case an analytic vector field may be analytically linearized under some Diophantine conditions} is obtained.

In \cite{Eli, G93, BGGM} a direct construction for KAM normalization is proposed. In these papers the traditional KAM procedure, based on a sequences of coordinate changes, is replaced by summation over diagrams analogously to the quantum field theory.

Our approach and these ones look technically very different. We do not know if it is possible to establish any connection between them.

\section{One degree of freedom}
\label{sec:1dof}

\subsection{Normal form}

Consider the case $n=1$ (one degree of freedom): $H_2 = \lambda xy$. The singular point at the origin may be hyperbolic or elliptic. In the latter case $x$ and $y$ are complex coordinates.

Let $Sym$ be the group of formal symplectic transformations of $(\mC^2, dy\wedge dx)$ with fixed point at $0\in\mC^2$ and differential at $0$ equal to the identity. By definition any element of $Sym$ is the time-one map for the Hamiltonian system
\begin{equation}
\label{hamF}
  \dot x = \partial F / \partial y, \quad
  \dot y = - \partial F / \partial x,
\end{equation}
where the Hamiltonian $F = F(x,y,t) = O_3(x,y)$ is a formal Taylor series in $x$ and $y$ with continuous in time $t\in [0,1]$ coefficients.\footnote
{Although $F$ is a formal series, the flow $\phi^t$ of (\ref{hamF}) is well-defined as a formal power series such that $\phi^t(0,0)=(0,0)$ and $D\phi^t(0,0)=I$.}

For any function\footnote{formal series}
$G(x,y) = \sum_{\alpha+\beta\ge 3} G_{\alpha\beta} x^\alpha y^\beta\in\calF$ we define its average by (\ref{aver}).
For any Hamiltonian function (\ref{H*}), $n=1$ we define $S[H]$, the power series in $w$, by (\ref{1dof}).

\begin{theo}
\label{theo:1dof}
The nonlinear operator $H\mapsto S[H]$ is $Sym$-invariant i.e., for any $\Phi\in Sym$
\begin{equation}
\label{sym-inv}
  S[H] = S[H\circ\Phi].
\end{equation}
\end{theo}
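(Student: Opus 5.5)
The approach is to identify $S[H]$ with the distribution function of $H$ with respect to the symplectic area. Since symplectic maps preserve area, this quantity is automatically $Sym$-invariant. Concretely, for a test function $f$ of one variable I would compute the ``integral'' $\int f(H)\,dx\,dy$ by expanding around $H_2$ and show that it equals a constant times $\int f(\lambda w)\,\bigl(1-\lambda^{-1}\partial_w S[H](w)\bigr)\,dw$. Then invariance of the left-hand side under $\Phi\in Sym$ forces invariance of $\partial_w S[H]$. Since $S[H]=O(w^2)$ has no constant term, this gives (\ref{sym-inv}).

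\emph{Step 1: the formal computation.} I would take $\lambda\in i\mR$ first, so that Lemma \ref{lem:aver0} applies. In the coordinates $w=xy$ with angle $\theta$ along the $H_2$-flow, we have $dx\wedge dy = c\, dw\wedge d\theta$. I would Taylor expand
$$f(\lambda xy + H_*)=\sum_{m\ge 0}\frac{1}{m!}f^{(m)}(\lambda xy)\,H_*^m .$$
Averaging over $\theta$ replaces $H_*^m$ by $\langle H_*^m\rangle(w)$, by Lemma \ref{lem:aver0}. Integrating by parts $m$ times in $w$ then gives
$$\int f(H)\,dx\,dy = c\int f(\lambda w)\sum_{m\ge0}\frac{(-1)^m}{\lambda^m m!}\,\partial_w^m\langle H_*^m\rangle\,dw .$$
The $m=0$ term is $1$. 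Term by term, the remaining part of the sum equals $-\lambda^{-1}\partial_w S[H]$, by comparison with (\ref{1dof}).

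\emph{Step 2: invariance and uniqueness.} Since $\Phi$ preserves $dx\wedge dy$, we have $\int f(H\circ\Phi)=\int f(H)$ for all $f$. The $m$-th term in the expansion only involves finitely many Taylor coefficients of $H_*$ in each power of $w$. So, degree by degree, equality of the integrals for a family of $f$ large enough to separate polynomials in $w$ (e.g.\ $f$ ranging over $e^{-\tau \cdot}$ restricted to a neighbourhood, or monomials weighted by a cutoff) yields $\partial_w S[H]=\partial_w S[H\circ\Phi]$.

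\emph{Step 3: making it rigorous in the formal setting.} This is the main obstacle: the integrals and the integrations by parts are not meaningful for formal series. Instead of Steps 1--2, I would prove the infinitesimal statement algebraically. Since every $\Phi\in Sym$ is a time-one map of a Hamiltonian $F(x,y,t)=O_3$, it suffices to show that $\frac{d}{dt}S[H\circ\phi^t]=0$. Along the flow $H_*$ changes by $\{H,F\}$ (with the appropriate sign convention), so one needs
$$\sum_{m\ge1}\frac{(-1)^{m-1}}{\lambda^{m-1}(m-1)!}\,\partial_w^{m-1}\bigl\langle H_*^{m-1}\{H_2+H_*,F\}\bigr\rangle=0 .$$
This is the formal shadow of $\int f'(H)\{H,F\}=\int\{f(H),F\}=0$. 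I would verify it directly: $\langle\{H_2,G\}\rangle=0$, and $\langle\{A,B\}\rangle$ is a total $w$-derivative of an averaged expression. The latter is checked on monomials $x^ay^b$, $x^cy^d$ with $a-b=d-c$; there $\{x^ay^b,x^cy^d\}=(bc-ad)x^{a+c-1}y^{b+d-1}$, which is a derivative in $w$ of the averaged product up to the factor $(b-a)$. These identities should telescope the sum. Finally, all coefficients of $S$ are polynomial in the $H_{\alpha\beta}$ and rational in $\lambda$. Hence an identity established for $\lambda\in i\mR$ (or obtained purely algebraically) extends to all $\lambda\ne0$.
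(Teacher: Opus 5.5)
Your Step 3 is essentially the paper's proof. The paper also reduces to infinitesimal invariance, and your monomial computation is its Lemma~\ref{lem:1dof} (stated there after splitting $F=\calA F'+\calB F''$), in the form $\langle\{F,G\}\rangle=-\lambda^{-1}\partial_w\langle\{H_2,F\}\,G\rangle$, so that by Leibniz the $H_*$-part of the $m$-th term cancels the $H_2$-part of the $(m+1)$-st and only $\langle\{H_2,F\}\rangle=0$ survives. Steps 1--2 are not in the paper, but they are a correct heuristic: $1-\lambda^{-1}\partial_w S(w)=\lambda(\nu^{-1})'(\lambda w)$, consistent with Theorem~\ref{theo:NF_1dof}.
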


Proof of Theorem \ref{theo:1dof} is contained in Section \ref{sec:1dof_proof}.

We say that the system (\ref{ham_eq}) is formally linearizable if the normal form of $H$ is trivial (equals $H_2$). If the origin is an elliptic equilibrium with linear frequency $\kappa$: $H_2 = i\kappa xy$, then linearizability is usually referred to as isochronicity.

\begin{cor}
\label{cor:lin}
The system (\ref{ham_eq}) is formally linearizable iff $S[H] = 0$.
\end{cor}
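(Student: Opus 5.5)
Both implications will follow from Theorem \ref{theo:1dof} (invariance of $S$ under $Sym$) together with Theorem \ref{theo:NF_1dof}. The one computation needed is that $S$ vanishes on a linear Hamiltonian: if $H_* = 0$, then every term $\langle H_*^m\rangle$ in (\ref{1dof}) is zero, so $S[H_2]=0$.

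\emph{Necessity.} Suppose the system is formally linearizable. Then there is a formal symplectic change (\ref{xyXY}) with $H\circ\Phi = H_2$. After composing with a linear symplectic map we may assume $\Phi\in Sym$. This is harmless: the differential of a normalizing map at $0$ preserves $H_2$, so it can be taken to be the identity. Theorem \ref{theo:1dof} then gives
$$
  S[H] = S[H\circ\Phi] = S[H_2] = 0 .
$$

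\emph{Sufficiency.} Suppose $S[H]=0$. By Theorem \ref{theo:NF_1dof}, the function inverse to $\nu$ is
$$
  w\mapsto \big(w - S[H_*](w/\lambda)\big)/\lambda = w/\lambda .
$$
Hence $\nu(z)=\lambda z$, so $N_*=0$ and the normal form equals $H_2$. That is, the system is formally linearizable.

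Alternatively, sufficiency can be argued directly from Theorem \ref{theo:1dof}. Let $\Phi\in Sym$ normalize $H$, so that $H\circ\Phi = \lambda xy + \sum_{k\ge2} N_k (xy)^k$. For $N_* = \sum_{k\ge2} N_k w^k$, only the $m=1$ term of (\ref{1dof}) contributes to the lowest nonzero coefficient. Indeed, if $N_k$ is the first nonzero coefficient, then $\langle N_*^m\rangle = O(w^{km})$, and after $m-1$ derivatives this is $O(w^{km-m+1})$, which is of degree greater than $k$ for $m\ge2$. Hence $S[H\circ\Phi] = N_k w^k + \ldots \neq 0$, a contradiction.

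The only point requiring care is the reduction to $\Phi\in Sym$, i.e. that the normalizing transformation may be taken tangent to the identity; the remaining steps are immediate.
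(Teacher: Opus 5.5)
Your proof is correct and follows the paper's argument. The paper takes a normalizing $\Phi\in Sym$ and uses the single chain ``linearizable $\iff H\circ\Phi=\lambda xy \iff S[H\circ\Phi]=0 \iff S[H]=0$''. That chain is your necessity argument together with your alternative sufficiency argument. Your lowest-order-term estimate, $\partial_w^{m-1}\langle N_*^m\rangle=O(w^{(k-1)m+1})$ with $(k-1)m+1>k$ for $m\ge 2$, supplies the justification of the middle equivalence, which the paper leaves implicit. Your main route through Theorem~\ref{theo:NF_1dof} is also valid, just heavier; the reduction to $\Phi\in Sym$ you flag is automatic, since the changes (\ref{xyXY}) defining the normal form are tangent to the identity by definition.
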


Indeed, let $\Phi\in Sym$ be the normalization transformation. Then linearizability is equivalent to the equation $H\circ\Phi = \lambda xy$, which is equivalent to $S[H\circ\Phi] = 0$. By Theorem \ref{theo:1dof} this is equivalent to the equation $S[H] = 0$.
\medskip

In the case of one degree of freedom the linearizability condition $S[H] = 0$ was obtained in \cite{Tre1,Tre2} (without a complete proof).

\subsection{Proof of Theorem \ref{theo:NF_1dof}}

The proof is based on Theorem \ref{theo:1dof}.
Let $\Phi$ be the normalization transformation: $H\circ\Phi = \nu(xy)$. We compute the function inverse to $\nu$ by using the Lagrange-B\"urmann formula:
$$
  \nu^{-1}(w) = \sum_{s=1}^\infty g_s w^s, \qquad
  g_s = \frac1{s!} \partial_z^{s-1}\Big|_{z=0} \Big( \frac{z}{\lambda z + N_*(z)} \Big)^s.
$$
We obtain:
$$
     g_s
  =  \frac{1}{\lambda^s s!} \partial_z^{s-1}\big|_{z=0} \Big( 1 + \frac{N_*(z)}{\lambda z}\Big)^{-s} \\
  =  \frac{1}{\lambda^s s!} \partial_z^{s-1}\big|_{z=0}
       \sum_{j=0}^\infty (-1)^j \frac{(s-1+j)!}{(s-1)! j!} \Big( \frac{N_*(z)}{\lambda z}\Big)^j .
$$
For any $s\ge 2$ we obtain:
$$
     g_s
  =  - \frac{s!}{s! 1!\lambda^{s+1}} N_s
     + \frac{(s+1)!}{s! 2!\lambda^{s+2}} \sum_{j_1 + j_2 = s+1} N_{j_1} N_{j_2}
     - \frac{(s+2)!}{s! 3!\lambda^{s+3}} \sum_{j_1 + j_2 + j_3 = s+2} N_{j_1} N_{j_2} N_{j_3}
     + \ldots
$$

On the other hand by using (\ref{1dof}) we have: $S[\nu(xy)]=S_2 w^2 + S_3 w^3 + \ldots$,
$$
     S_m
  =    \frac{1}{1!} N_m
     - \frac{m+1}{2!\lambda} \sum_{j_1 + j_2 = m+1} N_{j_1} N_{j_2}
     + \frac{(m+1)(m+2)}{3!\lambda^2} \sum_{j_1 + j_2 + j_3 = m+2} N_{j_1} N_{j_2} N_{j_3}
     - \ldots
$$

Comparing the last two equations, we obtain: $g_m = - \lambda^{-m-1} S_m$ for any $m\ge 2$. This proves Theorem \ref{theo:NF_1dof}. \qed

\subsection{Proof of Theorem \ref{theo:1dof}}
\label{sec:1dof_proof}

Consider the following differential operator:
$$
          \calF \ni F
      =   \sum_{\alpha+\beta\ge 3} F_{\alpha\beta} x^\alpha y^\beta
  \mapsto \calD F
     :=   \{H_2,F\}
      =   \sum_{\alpha+\beta\ge 3} \lambda(\alpha - \beta) F_{\alpha,\beta} x^\alpha y^\beta.
$$
Hence
$$
  \Ker\calD = \Big\{ F = \sum_{\alpha = \beta} F_{\alpha,\beta} x^\alpha y^\beta \Big\}, \quad
  \im\calD = \Big\{ F = \sum_{\alpha \ne \beta} F_{\alpha,\beta} x^\alpha y^\beta \Big\}.
$$

We define the operators $\calA$ and $\calB$ on $\calF$ such that
$$
           \calF\ni F
  \mapsto  \calA F = \sum_{\alpha = \beta}
                                  F_{\alpha,\beta} x^\alpha y^\beta, \quad
           \calF\ni F
  \mapsto  \calB F = \sum_{\alpha \ne \beta}
                                  \frac{F_{\alpha,\beta}}
                                       {\lambda(\alpha - \beta)} x^\alpha y^\beta.
$$

Suppose $F_1,F_2\in\calF$. Then obviously
\begin{equation}
\label{ADB1-}
  \calA\calD = \calD\calA = \calA\calB = \calB\calA = 0, \quad
  \calD\calB = \calB\calD = I - \calA,  \quad
  \langle F \rangle = \calA F\big|_{xy = w}.
\end{equation}

\begin{lem}
\label{lem:1dof}
For any two functions $G',G''\in\calF$
$$
    \langle\{\calB G',G''\}\rangle
  = - \frac1\lambda \partial_w \Big( \langle G' G'' \rangle - \langle G'\rangle \langle G''\rangle \Big).
$$
\end{lem}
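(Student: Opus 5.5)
The plan is to verify the identity coefficientwise: both sides are bilinear in $(G',G'')$, and both averaging and the product topology act termwise. So it suffices to take monomials $G' = x^{a}y^{b}$ and $G'' = x^{c}y^{d}$ and extend by bilinearity and continuity. Because $\langle\cdot\rangle$ only sees terms with equal powers of $x$ and $y$, the left-hand side can be nonzero only when $a+c = b+d$.

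\emph{Right-hand side.} If $a=b$, then $\calB G' = 0$, so the left side vanishes. On the right, $\langle G'G''\rangle$ is nonzero only when $c=d$, in which case it equals $w^{a+c} = \langle G'\rangle\langle G''\rangle$. In every case the right side is zero as well.

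Now suppose $a\ne b$. Then $\langle G'\rangle = 0$, and $\langle G'G''\rangle = w^{a+c}$ exactly when $a+c=b+d$. Write $k = a+c = b+d$. The right side becomes $-\frac{k}{\lambda} w^{k-1}$ in that case, and $0$ otherwise.

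\emph{Left-hand side.} Here $\calB G' = x^a y^b/(\lambda(a-b))$. With the paper's bracket convention for $n=1$, $\{F_1,F_2\} = \partial_y F_1\,\partial_x F_2 - \partial_x F_1\,\partial_y F_2$, we get
$$\{x^ay^b, x^cy^d\} = (bc - ad)\, x^{a+c-1}y^{b+d-1}.$$
When $a+c=b+d$, substituting $c = b+d-a$ gives $bc - ad = (b-a)(b+d) = (b-a)k$. Hence
$$\langle\{\calB G',G''\}\rangle = \frac{(b-a)k}{\lambda(a-b)}\, w^{k-1} = -\frac{k}{\lambda}\, w^{k-1},$$
which matches the right-hand side. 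When $a+c\ne b+d$, both sides are $0$.

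One subtlety remains. Definition (\ref{aver}) keeps only terms with $\alpha\ge 2$ and is stated for $G\in\calF$. The exponent $k-1$ with $k\ge 3$ is at least $2$, and $G'G''$ and the bracket are again $O_3$, so no truncation issue arises. I expect the only real obstacle to be bookkeeping of the sign convention of the Poisson bracket, namely checking that it agrees with $\calD F = \lambda(\alpha-\beta)F$ for $H_2=\lambda xy$; the factorization $bc-ad=(b-a)k$ is the key step.
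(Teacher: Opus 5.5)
Your proof is correct and is essentially the paper's argument. The paper does the same coefficient computation directly on full series instead of reducing to monomials: it keeps only $\alpha\ne\beta$ in $\calB G'$ and uses $\beta\gamma-\alpha\delta=k(\beta-\alpha)$ on the diagonal $\alpha+\gamma=\beta+\delta=k$, and the subtraction of $\langle G'\rangle\langle G''\rangle$ removes exactly the $\alpha=\beta$ terms, which is your case split $a=b$ versus $a\ne b$.
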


{\it Proof of Lemma \ref{lem:1dof}}. We put
$$
  G' = \sum_{\alpha + \beta \ge 3} g'_{\alpha\beta} x^\alpha y^\beta, \quad
  G'' = \sum_{\alpha + \beta \ge 3} g''_{\alpha\beta} x^\alpha y^\beta.
$$
Then
\begin{eqnarray*}
      \langle \{\calB G',G''\} \rangle
 &=&  \Big\langle \sum_{\alpha,\beta,\gamma,\delta,\;\alpha\ne\beta}
             \frac{g'_{\alpha\beta} g''_{\gamma\delta}}{\lambda (\alpha - \beta)} (\beta\gamma - \alpha\delta)
               x^{\alpha+\gamma-1} y^{\beta+\delta-1} \Big\rangle \\
 &=&  \sum_{\alpha,\beta,k,\;\alpha\ne\beta}
             \frac{g'_{\alpha\beta} g''_{k-\alpha,k-\beta}}{\lambda (\alpha - \beta)} k(\beta - \alpha) w^{k-1}
  =   - \frac1{\lambda} \partial_w \Big(\langle G' G''\rangle - \langle G'\rangle \langle G''\rangle \Big).
\end{eqnarray*}
\qed

Now we turn to the proof of Theorem \ref{theo:1dof}. First, note that given $\Phi\in Sym$ the new perturbation (new $H_*$) equals $H\circ\Phi - H_2$. It is sufficient to check (\ref{sym-inv}) for the near-identity transformation
$$
          (x,y)
  \mapsto \Phi(x,y)
     =    \Big( x + \eps\{F,x\} + O(\eps^2), y + \eps\{F,y\} + O(\eps^2) \Big)
$$
in the first approximation in $\eps$, where $F \in\calF$ is any formal Hamiltonian. In this case
$$
  H\circ\Phi = H + \eps \{F,H\} + O(\eps^2).
$$
In equations below we neglect terms of order $O(\eps^2)$.

By definition of operators $\calA$ and $\calB$ there exist $F',F'' \in \calF$ such that
$$
  F = \calA F' + \calB F'', \qquad
  F' = \calA F', \quad
  \calA F'' = 0.
$$
Hence (up to $O(\eps^2)$)
$$
    H\circ\Phi - H_2
  = H_* + \eps\{\calA F' + \calB F'', H_2\} + \eps \{\calA F' + \calB F'', H_*\}.
$$
By using the equations
$$
  \{\calA F', \lambda xy\} = 0, \quad
  \{\calB F'', \lambda xy\} =  - F'', \quad
  \langle \{\calA F', H_*^s\} \rangle = 0,
$$
we obtain:
\begin{eqnarray*}
     (H\circ\Phi - H_2)^m
 &=& H_*^m - m\eps F'' H_*^{m-1} + m\eps \{\calA F' + \calB F'', H_*\} H_*^{m-1} \\
 &=& H_*^m - m\eps F'' H_*^{m-1} + \eps \{\calA F' + \calB F'', H_*^m\}, \\
     \langle (H\circ\Phi - H_2)^m \rangle
 &=& \langle H_*^m \rangle - m\eps \langle F'' H_*^{m-1} \rangle + \eps \langle \{\calB F'', H_*^m\} \rangle.
\end{eqnarray*}

By Lemma \ref{lem:1dof}  the last term equals $- \frac\eps\lambda\partial_w \langle F'' H_*^m \rangle$. Therefore
$$
     \frac{d}{d\eps}\Big|_{\eps=0} S[H\circ\Phi]
  =  \sum_{m=1}^\infty \frac{(-1)^m}{\lambda^{m-1} m!}
               \partial_w^{m-1} \Big( - m\langle F'' H_*^{m-1}\rangle
                                - \frac1\lambda \partial_w \langle F'' H_*^m\rangle
                          \Big)
  = 0
$$
because $\langle F''\rangle = 0$.  \qed

\section{Normalization procedure}
\label{sec:NP}

Now we turn to the case of arbitrary $n$.
Assuming that the coordinate transformation (\ref{xyXY}) is the time-one shift along solutions of the system with Hamiltonian $F\in\calF$, we obtain the equation
\begin{equation}
\label{lin1}
  H_2 + N_* = e^{\{F,\cdot\}} H.
\end{equation}
Here
$$
    H
 \mapsto
    e^{\{F,\cdot\}} H
  = I + \frac1{1!} \{F,H\} + \frac1{2!} \{F,\{F,H\}\} + \ldots
$$
is the time-one shift operator. Let $F = F_3 + F_4 + \ldots$ Then (\ref{lin1}) takes the form
\begin{eqnarray}
\nonumber
     H_2 + N_3 + N_4 + \ldots
 &=& H_2 + H_3 + H_4 + \ldots
     + \frac1{1!} \{F_3 + F_4 + \ldots , H_2 + H_3 + \ldots\} \\
\nonumber
 &&  + \frac1{2!} \{F_3 + \ldots , \{F_3 + \ldots ,  H_2 + H_3 + \ldots\}\} + \ldots
\end{eqnarray}

Expanding this equation in homogeneous forms, we obtain:
\begin{eqnarray*}
     N_3
 &=& H_3 + \frac1{1!} \{F_3,H_2\}, \\
     N_4
 &=& H_4 + \frac1{1!} \{F_3,H_3\} + \frac1{1!}\{F_4,H_2\} + \frac1{2!} \{F_3,\{F_3,H_2\}\} , \\
     N_5
 &=& H_5 + \frac1{1!} \big( \{F_4,H_3\} + \{F_3,H_4\} \big) + \frac1{2!} \{F_3,\{F_3,H_3\}\} \\
 &&\!\!\!\!
     + \frac1{1!} \{F_5,H_2\}
           + \frac1{2!} \big( \{F_3,\{F_4,H_2\}\} + \{F_4,\{F_3,H_2\}\} \big)
           + \frac1{3!} \{F_3,\{F_3,\{F_3,H_2\}\}\} .
\end{eqnarray*}
In general we have:
\begin{eqnarray}
\label{lin2}
     N_m
 &=& \{F_m,H_2\} + \Phi_m, \qquad
     \Phi_m = \Psi_m + \Theta_m, \\
\nonumber
     \Psi_m
 &=&  \frac1{0!} H_m
    + \frac1{1!} \sum_{j_1 + j_2 = m+2}  \{F_{j_1},H_{j_2}\}
    + \frac1{2!} \sum_{j_1 + j_2 + j_3 = m+4} \{F_{j_1},\{F_{j_2},H_{j_3}\}\}  \\
\nonumber
 && + \ldots
    + \frac1{(m-3)!} \{F_3,\{F_3, \ldots \{F_3,H_3\}\ldots\}\} , \\
\nonumber
      \Theta_m
 &=&  \frac1{2!} \sum_{j_1 + j_2 = m+2} \{F_{j_1},\{F_{j_2},H_2\}\}
      + \frac1{3!} \sum_{j_1 + j_2 + j_3 = m+4}  \{F_{j_1},\{F_{j_2},\{F_{j_3},H_2\}\}\}  \\
\nonumber
  &&  + \ldots
      + \frac1{(m-2)!} \{F_3,\{F_3, \ldots \{F_3,\{F_3,H_2\}\}\ldots\}\}.
\end{eqnarray}
In all these sums values of the indices $j_1,j_2,\ldots$ are not less than 3.

System (\ref{lin2}) has a triangular structure: $\Phi_m$ is a function of $F_3,\ldots,F_{m-1}$ and known functions $H_2,\ldots,H_m$. Hence equations (\ref{lin2}) may be solved inductively one-by-one:
\begin{equation}
\label{lin3}
   N_m = \calA\Phi_m, \quad
   F_m = \calB \Phi_m + \hat F_m, \qquad
   \calD \hat F_m = 0.
\end{equation}
Below we put $\hat F_m = 0$.

By using the equation $\{F_j,H_2\} = \calA\Phi_j - \Phi_j$, we obtain:
\begin{eqnarray*}
      \Theta_m
 &=&  \frac1{2!} \sum_{j_1 + j_2 = m+2} \{F_{j_1}, (\calA - I)\Phi_{j_2}\}
      + \frac1{3!} \sum_{j_1 + j_2 + j_3 = m+4}  \{F_{j_1},\{F_{j_2}, (\calA - I)\Phi_{j_3}\}\}  \\
\nonumber
  &&  + \ldots
      + \frac1{(m-2)!} \{F_3,\{F_3, \ldots \{F_3,(\calA - I)\Phi_3\}\ldots\}\}.
\end{eqnarray*}
This implies the following recursive equation for $\Phi_m$:
\begin{eqnarray}
\label{Phi}
  &&  \Phi_3 = H_3, \quad  \Phi_m = \hat\Psi_m + \hat\Theta_m, \\[1mm]
\nonumber
      \hat\Psi_m
 &=&  \frac1{0!} H_m
      + \frac1{1!} \sum_{j_1 + j_2 = m+2} \{\calB\Phi_{j_1}, H_{j_2}\}
      + \frac1{2!} \sum_{j_1 + j_2 + j_3 = m+4}  \{\calB\Phi_{j_1},\{\calB\Phi_{j_2}, H_{j_3}\}\}  \\
\nonumber
  &&  + \ldots
      + \frac1{(m-3)!} \{\calB\Phi_3,\{\calB\Phi_3, \ldots \{\calB\Phi_3,H_3\}\ldots\}\}, \\
\nonumber
      \hat\Theta_m
 &=&  \frac1{2!} \sum_{j_1 + j_2 = m+2} \{\calB\Phi_{j_1}, (\calA - I)\Phi_{j_2}\}
      + \frac1{3!} \sum_{j_1 + j_2 + j_3 = m+4}  \{\calB\Phi_{j_1},\{\calB\Phi_{j_2}, (\calA - I)\Phi_{j_3}\}\}  \\
\nonumber
  &&  + \ldots
      + \frac1{(m-2)!} \{\calB\Phi_3,\{\calB\Phi_3, \ldots \{\calB\Phi_3,(\calA - I)\Phi_3\}\ldots\}\}.
\end{eqnarray}

\section{Expansion of $\Phi_m$}
\label{sec:ex}

For any $s$-linear operator $P_s$ on $\calF$ we put
$$
    P_s^k
  = \sum_{j_1+\ldots+j_s = k} P_s(H_{j_1},\ldots,H_{j_s}), \qquad
    j_1,\ldots,j_s \ge 3.
$$
In particular, if we have two forms $P_{s_1}$ and $R_{s_2}$,  we put
\begin{eqnarray}
\nonumber
     \{P_{s_1},R_{s_2}\}^k
 &=& \sum_{k_1 + k_2 = k} \{P_{s_1}^{k_1},R_{s_2}^{k_2}\} \\
\label{PR}
 &=& \sum_{j_1+\ldots+j_{s_1} + i_1 + \ldots + i_{s_2} = k}
          \{P_{s_1}(H_{j_1},\ldots,H_{j_{s_1}}), R_{s_2}(H_{i_1},\ldots,H_{i_{s_2}})\} .
\end{eqnarray}

\begin{lem}
\label{lem:Lambda}
For any $m\ge 3$
\begin{equation}
\label{PhiLam}
  \Phi_m = \sum_{s=1}^{m-2} \Lambda_s^{m-2+2s},
\end{equation}
where the $s$-linear operators $\Lambda_s$ satisfy the following recursive equations:
\begin{eqnarray}
\nonumber
     \Lambda_1
 &=& I, \\
\nonumber
     \Lambda_s
 &=&
     \frac1{1!} \{\calB\Lambda_{s-1}, I\}
    + \frac1{2!} \sum_{s_1 + s_2 = s-1} \{\calB\Lambda_{s_1},\{\calB\Lambda_{s_2}, I\}\} + \ldots \\
\nonumber
 && + \frac1{2!} \sum_{s_1 + s_2 = s} \{\calB\Lambda_{s_1}, (\calA - I)\Lambda_{s_2}\} \\
\label{Lambda}
 && + \frac1{3!} \sum_{s_1 + s_2 + s_3 = s} \{\calB\Lambda_{s_1},\{\calB\Lambda_{s_2}, (\calA - I)\Lambda_{s_3}\}\}
    + \ldots
\end{eqnarray}
\end{lem}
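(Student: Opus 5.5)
We argue by strong induction on $m$, substituting the ansatz (\ref{PhiLam}) into the recursion (\ref{Phi}). Each $\Phi_m$ is homogeneous of degree $m$, and so is each term $\Lambda_s(H_{j_1},\ldots,H_{j_s})$ with $j_1+\ldots+j_s = m-2+2s$. To see this, note that every Poisson bracket lowers the total degree by $2$, and the operators $\calA,\calB$ preserve degree. A form $\Lambda_s$ built from $s$ arguments by $s-1$ brackets therefore has degree $\sum j_i - 2(s-1) = m$. This explains the exponent $m-2+2s$. The range $1\le s\le m-2$ comes from $j_i\ge 3$, which gives $3s\le m-2+2s$. The base case is $\Phi_3 = H_3 = \Lambda_1^{3}$, since $\Lambda_1=I$.

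For the inductive step, assume (\ref{PhiLam}) holds for all $\Phi_j$ with $j<m$. Insert these expansions into each bracket of $\hat\Psi_m$ and $\hat\Theta_m$, using the multilinear convention (\ref{PR}).

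\textbf{The $\hat\Psi_m$ terms.} Consider a typical term of $\hat\Psi_m$ with $k$ factors $\calB\Phi$, namely
$$
\frac1{k!}\sum_{j_1+\ldots+j_k+i=m+2k}\{\calB\Phi_{j_1},\ldots\{\calB\Phi_{j_k},H_i\}\ldots\}.
$$
Replacing each $\Phi_{j_r}$ by $\sum_{s_r}\Lambda_{s_r}^{j_r-2+2s_r}$ turns it into a sum over $s_1,\ldots,s_k$ of
$$
\frac1{k!}\{\calB\Lambda_{s_1},\ldots\{\calB\Lambda_{s_k},I\}\ldots\}^{K}.
$$
This is an $s$-linear form with $s=s_1+\ldots+s_k+1$, evaluated on $H$'s of total degree
$$
K=\sum_r (j_r-2+2s_r)+i = m+2k-2k+2(s-1) = m-2+2s.
$$
The argument of such a form has degree exactly $m-2+2s$ for its own $s$. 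Hence, for fixed $s$, these contributions assemble into the first line of (\ref{Lambda}) evaluated at $m-2+2s$, with the constraint $s_1+\ldots+s_k=s-1$.

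\textbf{The $\hat\Theta_m$ terms.} These are handled the same way, with the innermost $H_i$ replaced by $(\calA-I)\Phi_{j_{k}}$. All $k$ slots now carry $\Lambda$'s, so the constraint becomes $s_1+\ldots+s_k=s$. The degree bookkeeping gives $\sum_r(j_r-2+2s_r)=m+2(k-1)-2k+2s=m-2+2s$, as required.

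\textbf{The main obstacle: bijectivity of the reindexing.} One must check that the map from summation data to summation data is a bijection. On one side the data are $(j_1,\ldots,j_k)$ together with, for each $r$, a choice of $s_r$ and a tuple of $H$-degrees summing to $j_r-2+2s_r$. On the other side the data are $(s_1,\ldots,s_k)$ together with a single tuple of $H$-degrees, with the sums over $j_r$ absorbed. Given the $H$-degrees and the $s_r$, each $j_r$ is determined, so nothing is double-counted. The admissible ranges also match: the constraint $j_r<m$ corresponds exactly to terms with $k\ge1$ in $\hat\Psi$, or $k\ge2$ in $\hat\Theta$. Any tuple in the $\Lambda$-recursion automatically has $j_r\ge3$, because each $\Lambda_{s_r}$ has degree at least $3$.

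Finally, $\Lambda_s$ defined by (\ref{Lambda}) is a universal $s$-linear operator, independent of $m$, since the coefficients $1/k!$ do not depend on $m$. Summing over $s$ then yields (\ref{PhiLam}) for $\Phi_m$. The recursion is well-founded because every $s_r$ on the right-hand side is strictly less than $s$. This uses $s_r\ge1$: $\Lambda_0$ does not occur, and in the $\hat\Theta$ sums $k\ge2$.
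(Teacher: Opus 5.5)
Your proposal is correct and follows the paper's argument. Both substitute the expansions of the lower $\Phi_j$ into the recursion (\ref{Phi}) and regroup by multilinearity using the convention (\ref{PR}). The paper phrases the induction over the multilinear degree $s$ rather than over $m$, and leaves implicit the degree bookkeeping, the bijectivity of the reindexing and the range checks ($j_r\ge 3$, $j_r<m$, $s_r<s$), all of which you supply explicitly.
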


{\it Proof of Lemma \ref{lem:Lambda}}. According to definition of $\hat\Psi_m$ and $\hat\Theta_m$ the linear in $H_*$ form in $\Phi_m$ equals $H_m$. Therefore $\Lambda_1$ is the identity operator.

Suppose the lemma is true for all values of $s$ smaller than the given one. By (\ref{Phi}) we have:
\begin{eqnarray*}
      \Lambda_s^{m-2+2s}
  &=& \frac1{1!} \sum_{j_1 + j_2 = m+2} \{\calB\Lambda_{s-1}^{j_1 - 2 + 2(s-1)}, H_{j_2}\} \\
  &&  + \frac1{2!} \sum_{j_1 + j_2 + j_3 = m+4,\, s_1 + s_2 = s-1}
                   \{\calB\Lambda_{s_1}^{j_1 - 2 + 2s_1}, \{\calB\Lambda_{s_2}^{j_2 - 2 + 2s_2}, H_{j_3}\}\}
      + \ldots \\
  &&  + \frac1{2!} \sum_{j_1 + j_2 = m+2,\, s_1 + s_2 = s}
                          \{\calB\Lambda_{s_1}^{j_1 - 2 + 2s_1}, (\calA - I)\Lambda_{s_2}^{j_2-2+2s_1}\} \\
  &&  + \frac1{3!} \sum_{j_1 + j_2 + j_3 = m+4,\, s_1 + s_2 + s_3 = s}
    \{\calB\Lambda_{s_1}^{j_1 - 2 + 2s_1},
              \{\calB\Lambda_{s_2}^{j_2 - 2 + 2s_2}, (\calA - I)\Lambda_{s_3}^{j_3-2+2s_3}\}\}
      + \ldots
\end{eqnarray*}
Therefore by (\ref{PR})
\begin{eqnarray*}
       \Lambda_s^{m-2+2s}
  &=&  \frac1{1!} \{\calB\Lambda_{s-1}, H_*\}^{m-2+2s}
     + \frac1{2!} \sum_{s_1 + s_2 = s-1} \{\calB\Lambda_{s_1}, \{\calB\Lambda_{s_2}, H_*\}\}^{m-2+2s}
     + \ldots \\
  && + \frac1{2!} \sum_{s_1 + s_2 = s} \{\calB\Lambda_{s_1}, (\calA - I)\Lambda_{s_2}\}^{m-2+2s} \\
  && + \frac1{3!} \sum_{s_1 + s_2 + s_3 = s}
                       \{\calB\Lambda_{s_1}, \{\calB\Lambda_{s_2}, (\calA - I)\Lambda_{s_3}\}\}^{m-2+2s}
     + \ldots
\end{eqnarray*}
\qed

\section{Forms $\Lambda_s^m$ as homogeneous polynomials}
\label{sec:forms}

\begin{lem}
\label{lem:homo}
Any $s$-linear form $\Lambda_s^m = \Lambda_s^m(H_*)$ takes values in the space of homogeneous polynomials in $x,y$:
$$
    \Lambda_s^{m-2+2s}
  = \sum_{|\alpha| + |\beta| = m} \Lambda_{s\alpha\beta}^m x^\alpha y^\beta
$$
with coefficients $\Lambda_{s\alpha\beta}^m$ in the form of polynomials in $H_{\gamma\delta}$,
$$
  \deg (\Lambda_{s\alpha\beta}^{m-2+2s}) = s, \quad
  w(\Lambda_{s\alpha\beta}^{m-2+2s}) - (\alpha,\beta) = T, \quad
  \delta(T) = 0, \quad
  |T| = 2s-2.
$$
\end{lem}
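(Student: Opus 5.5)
Induction on $s$, using the recursion (\ref{Lambda}) of Lemma \ref{lem:Lambda}. The statement is stronger than needed at the level of the whole form, so we prove it monomial by monomial. Each term of $\Lambda_s^{m-2+2s}$ is a sum of terms $c\,M\,x^\alpha y^\beta$, where $M=\prod_{j=1}^s H_{\alpha^{(j)}\beta^{(j)}}$ is a product of coefficients taken from the arguments $H_{j_1},\ldots,H_{j_s}$. For each such term we verify three things: $\deg M=s$, $T:=w(M)-(\alpha,\beta)\in\mZ_+^{2n}$ satisfies $\delta T=0$, and $|T|=2s-2$. Homogeneity of degree $m$ in $(x,y)$ then follows from $|T|=2s-2$ and $\sum_j(|\alpha^{(j)}|+|\beta^{(j)}|)=\sum_j j_i=m-2+2s$. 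The base case $s=1$ is immediate: $\Lambda_1^{m}=H_m=\sum H_{\alpha\beta}x^\alpha y^\beta$, so $M=H_{\alpha\beta}$ and $T=0$.

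For the inductive step we first record how the three operations act on a single term $c\,M\,x^\alpha y^\beta$. The operators $\calA$ and $\calB$ act diagonally on monomials. They either kill the term or multiply it by a scalar depending only on $\lambda,\alpha,\beta$. Hence they preserve $M$ up to a constant factor and preserve $(\alpha,\beta)$, so $T$ is unchanged. This is the reason the coefficients $c_M$ in Theorem \ref{theo:N} depend on $\lambda$ and the exponents.

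The Poisson bracket of $c_1M_1x^{\alpha_1}y^{\beta_1}$ and $c_2M_2x^{\alpha_2}y^{\beta_2}$ is a sum over $k=1,\ldots,n$ of terms proportional to $M_1M_2\,x^{\alpha_1+\alpha_2-e_k}y^{\beta_1+\beta_2-e_k}$. Consequently degrees add, weights add, and
$$
T_{\rm new}=T_1+T_2+(e_k,e_k).
$$
This gives $\delta T_{\rm new}=\delta T_1+\delta T_2=0$ and $|T_{\rm new}|=|T_1|+|T_2|+2$. It remains to check $T_{\rm new}\ge0$ componentwise, which holds because $T_1,T_2\ge0$ and $(e_k,e_k)\ge0$. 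The key point is that $T$ records the exponents "contracted" by the brackets, each contraction removing one $x_k$ and one $y_k$.

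Now consider a general term of (\ref{Lambda}). It is an iterated bracket $\{\calB\Lambda_{s_1},\{\calB\Lambda_{s_2},\ldots,\{\calB\Lambda_{s_{r}},X\}\ldots\}\}$ with $r$ brackets. There are two kinds of terms.

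In the first kind the innermost entry is $X=H_*$ (the identity slot) and $s_1+\cdots+s_r=s-1$. Then
$$
\deg=\sum s_i+1=s,\qquad |T|=\sum(2s_i-2)+0+2r=2(s-1)=2s-2.
$$

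In the second kind the innermost entry is $X=(\calA-I)\Lambda_{s_{r+1}}$ with $r+1$ factors summing to $s$, each $s_i<s$ since $r\ge1$. Then
$$
|T|=\sum_{i=1}^{r+1}(2s_i-2)+2r=2s-2.
$$

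In both cases the inductive hypothesis applies to every $\Lambda_{s_i}$. We also note that only finitely many terms of (\ref{Lambda}) contribute at each order, since all $s_i\ge1$.

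The main obstacle is bookkeeping: the terms must be organized so that the superscript grading $m-2+2s$ matches (\ref{PR}), with the correct $j$-sum split among the factors. This is essentially the content of Lemma \ref{lem:Lambda}, so we invoke it rather than redo it. After that, the only genuine computation is the bracket identity $T_{\rm new}=T_1+T_2+(e_k,e_k)$, which we verify directly from the formula for $\{\,,\}$ in LN coordinates.
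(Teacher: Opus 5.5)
Your proof is correct and follows the same route as the paper: induction on $s$ through the recursion (\ref{Lambda}), with the key fact that the Poisson bracket of two monomials removes one $x_k$ and one $y_k$, so that $T$ gains $(e_k,e_k)$ at each bracket. You also spell out what the paper leaves implicit, namely that $\calA$ and $\calB$ act diagonally on monomials, and you give the explicit count $|T|=\sum(2s_i-2)+2r=2s-2$ for both kinds of terms.
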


{\it Proof}. We use induction in $s$. For $s=1$ we have: $\Lambda_{s\alpha\beta}^{m-2+2s} = H_{\alpha\beta}$, where $|\alpha| + |\beta| = m$.

Suppose Lemma \ref{lem:homo} holds for all $s$ less than a given value. Then the induction step follows from (\ref{Lambda}). The vector $T$ appears because the Poisson bracket of any two monomials
$\{x^{\alpha'} y^{\beta'}, x^{\alpha''} y^{\beta''}\}$ is a linear combination of monomials $x^\alpha y^\beta$ with
$$
  \delta\Big( (\alpha', \beta') + (\alpha'', \beta'') - (\alpha, \beta) \Big) = 0, \quad
   \Big| (\alpha', \beta') + (\alpha'', \beta'') - (\alpha, \beta) \Big| = 2.
$$
\qed

Theorem \ref{theo:N} follows from Lemma \ref{lem:homo}. Indeed, by (\ref{lin3}) and (\ref{PhiLam})
$$
  N_m = \calA\Phi_m = \sum_{s=1}^{m-2} \calA\Lambda_s^{m - 2 + 2s}.
$$
Hence, Theorem \ref{theo:N} is a restriction of Lemma \ref{lem:homo} to $(\alpha,\beta)$, satisfying $\langle\lambda,\beta-\alpha\rangle=0$.

\section{Marked full binary trees}

\subsection{Right factorizations}

In Section \ref{sec:Q} we have defined marked full binary trees.
Given two MFBT's $t_1\in\calT_{s_1}^\circ$ and $t_2\in\calT_{s_2}^\circ$ we construct the products
$t_1 t_2,t_1\circ t_2 \in\calT_{s_1+s_2}$, joining their roots by branches:
$$
  \begin{picture}(40,22)
            \put(0,0){\line(1,1){20}}
            \put(20,0){\line(1,1){10}}
            \put(40,0){\line(-1,1){20}}
            \put(30,10){$\circ$}
            \end{picture}
  \;\;
  \begin{picture}(20,22)
            \put(0,0){\line(1,1){10}}
            \put(20,0){\line(-1,1){10}}
            \end{picture}
  \; =\;
  \begin{picture}(80,32)
            \put(0,0){\line(1,1){40}}
            \put(20,0){\line(1,1){10}}
            \put(40,0){\line(-1,1){20}}
            \put(60,0){\line(1,1){10}}
            \put(80,0){\line(-1,1){40}}
            \put(30,10){$\circ$}
            \end{picture} \; , \qquad
  \begin{picture}(40,22)
            \put(0,0){\line(1,1){20}}
            \put(20,0){\line(1,1){10}}
            \put(40,0){\line(-1,1){20}}
            \put(30,10){$\circ$}
            \end{picture}
  \;\circ\;
  \begin{picture}(20,22)
            \put(0,0){\line(1,1){10}}
            \put(20,0){\line(-1,1){10}}
            \end{picture}
  \; =\;
  \begin{picture}(80,32)
            \put(0,0){\line(1,1){40}}
            \put(20,0){\line(1,1){10}}
            \put(40,0){\line(-1,1){20}}
            \put(60,0){\line(1,1){10}}
            \put(80,0){\line(-1,1){40}}
            \put(42,38){$\circ$}
            \put(30,10){$\circ$}
            \end{picture} \; .
$$
The difference is that for the ordinary product the new root turns to be unmarked, and for the $\circ$-product marked.

Such multiplications are neither associative nor commutative. Below if there are no brackets, we perform multiplication from right to left. For example, $t_4 t_3\circ t_2 t_1$ means $t_4 (t_3\circ (t_2 t_1))$.

If we remove the root of $t\in\calT_s$ (with the corresponding branches), the tree $t$ breaks into two trees $t_1\in\calT_{s_1}$ (the left one) and $t_2\in\calT_{s_2}$ (the right one) such that $t = t_1 t_2$ (or
$t = t_1\circ t_2$) and $s = s_1 + s_2$.

Let $\tau$ denote the simplest FBT, the single element in $\calT_1^\circ$.

\begin{dfn}
Right factorization of $t\in\calT^\circ$ is its presentation in the form
$t = t_k\cdot\ldots\cdot t_1$, where any dot denotes either ordinary or $\circ$-product. If $t_1=\tau$, the right factorization is said to be basic.
\end{dfn}

\begin{lem}
\label{lem:right+}
For any $t\in\calT^\circ$ there exists a unique basic right factorization $t = t_k\cdot\ldots\cdot t_1$. The number $k$ equals the number of vertices on the right backslash line of $t$. Any right factorization of $t$ has the form
$$
  t = t_k\cdot\ldots\cdot t_{l+1}\cdot \hat t_l, \qquad
  \hat t_l = t_l\cdot\ldots\cdot t_1, \quad  1\le l\le k.
$$
\end{lem}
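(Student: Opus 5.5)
The plan is to argue by induction on the number of leaves $s$ of $t\in\calT_s^\circ$, using only the root decomposition described just before the definition. Removing the root of $t$ gives $t=t'\cdot t''$, where the dot is the ordinary product if the root is unmarked and the $\circ$-product if it is marked. The pair $(t',t'')$ and the type of the product are uniquely determined by $t$, since the root, its two children and its mark are intrinsic data of the tree.

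For existence and uniqueness of the basic right factorization, first take $s=1$: then $t=\tau$, and the factorization with $k=1$, $t_1=\tau$, is the only one, because $\tau$ cannot be written as a product. For $s>1$ write $t=t'\cdot t''$. By the convention that multiplication is performed from right to left, any right factorization $t=t_k\cdot\ldots\cdot t_1$ with $k\ge2$ means $t=t_k\cdot(t_{k-1}\cdot\ldots\cdot t_1)$. Hence $t_k=t'$, the top dot is the root's product type, and $t_{k-1}\cdot\ldots\cdot t_1=t''$. Applying the induction hypothesis to $t''$ gives existence and uniqueness of the basic factorization of $t$. The case $k=1$ with $t_1=\tau$ is excluded since $t\ne\tau$.

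To count the factors, I observe that the right backslash line of $t$ consists of the root of $t$ together with the vertices on the right backslash line of $t''$. Indeed, from the root this line descends along backslash branches into the right subtree, i.e. to the root of $t''$, and continues down its right line. So the number of vertices on it equals $1$ plus the corresponding number for $t''$. By induction this equals $1+(k-1)=k$, and the base case $\tau$ has one vertex on that line.

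For the description of arbitrary right factorizations, any right factorization $t=t'_p\cdot\ldots\cdot t'_1$ unwinds by the same argument: $t'_p,\ldots,t'_2$ are successively the left subtrees peeled off along the right backslash line, and $t'_1$ is the remaining subtree hanging from a vertex of that line. Comparing with the basic factorization, which peels off all the way down to the leaf $\tau$, gives $t'_j=t_{k-p+j}$ for $j\ge2$, with the same product symbols. The last factor $t'_1$ is the subtree rooted at the $(p)$-th vertex from the top of the right line. Setting $l=k-p+1$, this subtree equals $t_l\cdot\ldots\cdot t_1=\hat t_l$, by uniqueness of the basic factorization applied to it.

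The only delicate point is bookkeeping: the right-to-left bracketing convention must match the peeling of the right line. The rest is routine induction.
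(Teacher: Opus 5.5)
Your proof is correct and follows the same route as the paper: induction on the number of leaves via the unique root decomposition $t=t'\cdot t''$, with the induction hypothesis applied to $t''$. You also spell out the count of vertices on the right backslash line and the description of arbitrary right factorizations, both of which the paper leaves implicit.
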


{\it Proof}. The assertion is obvious if $t = \tau$. Suppose it holds for $s < s_0$. If $t\in\calT_{s_0}$, we present uniquely $t$ in the form $t = t'\cdot t''$, where the dot denotes the ordinary or $\circ$-product, $t'\in\calT_{s'}^\circ$, $t''\in\calT_{s''}^\circ$, $s' + s'' = s_0$. By induction assumption there is a unique basic right factorization $t'' = t_k\cdot\ldots\cdot t_1$. Then
$t = t'\cdot t_k\cdot\ldots\cdot t_1$ is the unique basic right factorization of $t$. \qed

{\bf Example}. The basic right factorization of
$t = \begin{picture}(80,32)
     \put(0,0){\line(1,1){40}}
     \put(4,9){$\circ$}
     \put(20,0){\line(-1,1){10}}
     \put(40,0){\line(1,1){20}}
     \put(60,0){\line(1,1){10}}
     \put(71,9){$\circ$}
     \put(80,0){\line(-1,1){40}}
     \end{picture}$
equals
$t = \begin{picture}(20,12)
      \put(0,0){\line(1,1){10}}
      \put(20,0){\line(-1,1){10}}
      \put(4,9){$\circ$}
      \end{picture}
      \; \tau\,\tau\circ\tau$.

\subsection{Backslash codes}

Backslash codes for MFBT's are defined in Section \ref{sec:Q}. Given $t\in\calT^\circ$ let
\begin{equation}
\label{rf}
  t   =    t'_1\ldots t'_{j_1}
     \circ t''_1\ldots t''_{j_2}
     \circ \;\ldots\;
     \circ t^{(s+1)}_1\ldots t^{(s+1)}_{j_{s+1}}, \qquad
     t^{(s+1)}_{j_{s+1}} = \tau
\end{equation}
be its basic right factorization. Here we explicitly distinguish all ordinary and $\circ$-products: for any $r=1,\ldots,s+1$ the products between $t_1^{(r)}$ and $t_{j_r}^{(r)}$ are ordinary. The numbers $j_r$ may be equal to or greater than one.

\begin{lem}
\label{lem:composition}
Let $b(t_j^{(k)})$ be the backslash codes of multipliers from (\ref{rf}). Then\footnote{Note that this collection of trees contains all multipliers from (\ref{rf}) except the last one $t^{(s+1)}_{j_{s+1}}$.}
$$
    b(t)
  = b(t'_1)\ldots b(t'_{j_1})
    \;\ldots \;
    b(t^{(s+1)}_1)\ldots b(t^{(s+1)}_{j_{s+1}-1})
    (j_1 j_2\ldots j_{s+1}).
$$
\end{lem}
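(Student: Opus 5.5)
We read off the backslash lines of $t$ directly from the basic right factorization (\ref{rf}). Write the factorization as $t = u_k\cdot u_{k-1}\cdot\ldots\cdot u_1$ with $u_1=\tau$ and $k = j_1+\ldots+j_{s+1}$. By Lemma \ref{lem:right+}, $k$ is the number of vertices on the rightmost backslash line of $t$.

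\textbf{Step 1: the rightmost line.} Unwinding the definition of the products, $u_k\cdot(u_{k-1}\cdot(\ldots\cdot u_1))$ is obtained by successively attaching the roots of $u_k,u_{k-1},\ldots,u_2$ as left children of new vertices. These new vertices, together with the leaf $u_1=\tau$, form a single backslash chain descending from the root of $t$ to its last leaf. This chain is exactly the line $l_m$ of $t$, where $m$ is the number of leaves. Ordered from left to right (that is, from the top down), its vertices correspond to the products between $u_k$ and $u_{k-1}$, then between $u_{k-1}$ and $u_{k-2}$, and so on, ending with the leaf. A vertex is marked iff the corresponding product is a $\circ$-product.

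In (\ref{rf}) the $\circ$-products occur right after the blocks $t'_1\ldots t'_{j_1}$, then $t''_1\ldots t''_{j_2}$, and so on. Hence the marked vertices sit at positions $j_1,\ j_1+j_2,\ \ldots,\ j_1+\ldots+j_s$ along $l_m$, and the total number of vertices is $j_1+\ldots+j_{s+1}$. By the definition of the code, the successive differences give $(j_1,j_2,\ldots,j_{s+1})$. One should check that the leaf counts as the last vertex and is never marked, and that the top vertex is the root. Both are consistent with the definition (\ref{code}).

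\textbf{Step 2: the remaining lines.} Every other backslash line $l_i$, $i<m$, passes through a leaf lying in one of the subtrees $u_k,\ldots,u_2$. Such a line starts at that leaf and goes up-left. It stays inside that subtree until it reaches the subtree's root. It cannot continue further, because the subtree's root is a left child, so the edge to its parent is a slash branch, not a backslash one. Consequently the vertices of $l_i$ in $t$ coincide, with their marks and left-to-right order, with the vertices of the corresponding backslash line of the factor $u_r$. Its code in $t$ therefore equals its code in $b(u_r)$.

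\textbf{Step 3: ordering.} The leaves of $t$ are ordered as those of $u_k$, then $u_{k-1}$, \ldots, then $u_2$, then the final leaf. These factors are precisely $t'_1,\ldots,t^{(s+1)}_{j_{s+1}-1}$ in left-to-right order. Concatenating the codes line by line therefore gives the stated formula.

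\textbf{Main obstacle.} The only delicate point is Step 2: one must verify rigorously that a backslash line cannot cross from one factor into the rightmost spine. The cleanest way is a short induction on $k$ using $t = u_k\cdot t''$, as in the proof of Lemma \ref{lem:right+}. Adding the new root extends only the last backslash line, by one vertex placed at the top/left of it, marked iff the product is $\circ$. Meanwhile the lines of $u_k$ are unchanged, and their codes are prepended.
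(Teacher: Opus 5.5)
Your proposal is correct and takes essentially the paper's approach. You read the last code group $(j_1 j_2\ldots j_{s+1})$ off the right backslash line, whose vertices correspond to the multipliers of the basic right factorization and are marked exactly at the $\circ$-products. You then note that deleting this line leaves the factors $t'_1,\ldots,t^{(s+1)}_{j_{s+1}-1}$ with their backslash lines and codes unchanged, taken in left-to-right order. Your observation that each factor's root is a left child, so its backslash lines cannot continue into the spine, makes explicit a point the paper treats as evident.
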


{\it Proof}. Let the backslash code of $t$ satisfy (\ref{code}). By Lemma \ref{lem:right+} multipliers $t_j^{(r)}$ in (\ref{rf}) are associated one-to-one with vertices $v_j^{(r)}$ on the right backslash line of $t$. The vertex $v_j^{(r)}$ is marked if after $t_j^{(r)}$
there is the symbol $\circ$, otherwise $v_j^{(r)}$ is unmarked. This implies the equation
$$
  (c_{m1},\ldots,c_{m i_m+1}) = (j_1 j_2 \ldots j_{s+1}).
$$

If we remove from $t$ all vertices, lying on the right backslash line with their edges, then $t$ breaks into the disjoint union of the MFBT's
$t'_1,\ldots,t'_{j_1}, t''_1,\ldots,t''_{j_2}, \ldots, t^{(s+1)}_1\ldots t^{(s+1)}_{j_{s+1}-1}$.
Backslash code of this union equals
$b(t'_1)\ldots b(t'_{j_1})\ldots
    b(t^{(s+1)}_1)\ldots b(t^{(s+1)}_{j_{s+1}-1})$.  \qed

\begin{cor}
By induction Lemma \ref{lem:composition} implies that two MFBT's, having the same backslash code, must coincide.
\end{cor}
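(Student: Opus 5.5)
The corollary says: if two MFBT's $t,\tilde t$ have $b(t)=b(\tilde t)$, then $t=\tilde t$. I will prove it by strong induction on the number of leaves $s$, or equivalently on the number of parenthesized groups in the code.

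First I record that the code determines the number of leaves. By (\ref{code}) the code consists of one parenthesized block per backslash line $l_k$, and there are exactly $s$ such lines. So $b(t)=b(\tilde t)$ forces $t,\tilde t\in\calT_s^\circ$ for the same $s$. The base case $s=1$ is trivial, since $\calT_1^\circ=\{\tau\}$.

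For the induction step, take $t$ with $s\ge 2$ leaves and write its basic right factorization (\ref{rf}), which exists and is unique by Lemma \ref{lem:right+}. By Lemma \ref{lem:composition} the last block of $b(t)$ equals $(j_1 j_2\ldots j_{s+1})$. This block determines:
\begin{itemize}
\item the number $k=j_1+\ldots+j_{s+1}$ of vertices on the right backslash line;
\item which of the products in (\ref{rf}) are $\circ$-products, namely those after positions $j_1,\ j_1+j_2,\ \ldots$
\end{itemize}
The remaining part of $b(t)$ is the concatenation
$$
b(t'_1)\ldots b(t'_{j_1})\ldots b(t^{(s+1)}_1)\ldots b(t^{(s+1)}_{j_{s+1}-1})
$$
of the codes of the $k-1$ multipliers other than the final $\tau$.

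The one genuine point is that this concatenation splits uniquely into the individual codes. I handle it by noting that each code of a tree with $r$ leaves has exactly $r$ blocks. So the split is determined once the leaf counts are known, and these are recoverable by reading blocks from the left: the code of a tree is characterized as a block sequence ending in a block whose entries sum to one less than the number of preceding blocks belonging to that tree plus one. This follows from Lemma \ref{lem:composition}, since the final block sums to $k$, and the preceding blocks come from $k-1$ subtrees. Parsing recursively from the right decodes everything.

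A simpler route avoids explicit parsing. Apply the same decomposition to $\tilde t$. The last blocks coincide, so $\tilde t$ has the same $k$ and the same marking pattern. Then the two concatenations agree as sequences. I argue inductively that the first factor codes coincide: from the right, the last factor's code is the shortest suffix that is a valid code, by the sum condition above. By the induction hypothesis, equal codes give equal factors. Hence all multipliers coincide, and reassembling via (\ref{rf}) gives $t=\tilde t$.

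I expect the unique splitting of the concatenation to be the only real obstacle. I would settle it via the block-count/sum invariant, namely that for any code the total of all entries equals the number of blocks minus one plus the number of blocks, or a similar count of vertices, $2s-1$. Such an invariant makes the code prefix-free and hence uniquely decodable.
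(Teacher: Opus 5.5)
\textbf{Verdict: genuine gap in the splitting step.} Your overall route is the paper's one-line argument made explicit. By Lemma \ref{lem:composition}, the last block of $b(t)$ fixes the number $k$ of vertices on the right backslash line and which of them are marked. Induction is then applied to the $k-1$ trees hanging off that line. You correctly see that this requires $b(t_1)\ldots b(t_{k-1})$ to split uniquely into its factors, which the paper leaves implicit. However, your justification of that step is wrong in three places.
\begin{enumerate}
\item The characterization of a code through its last block sum and the number of preceding blocks fails. $b(\tau(\tau\tau))=(1)(1)(3)$ and $b((\tau\tau)\tau)=(1)(2)(2)$ have the same number of blocks, but their last sums are $3$ and $2$.
\item The set of codes is not prefix-free. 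For $t=t'\cdot t''$, the lines through the leaves of $t'$ carry exactly the vertices and marks of $t'$, so $b(t')$ is always a proper prefix of $b(t)$. For example, $(1)(2)$ is a prefix of $(1)(2)(2)$.
\item The total $2s-1$ alone does not prevent one code from being a proper suffix of another. The extra $s-r$ leading blocks would merely have to sum to $2(s-r)$.
\end{enumerate}

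\textbf{The fix.} The missing ingredient is the partial-sum inequality. Let $e_r$ be the sum of the $r$-th block; then $e_1+\cdots+e_j\le 2j-1$ for $j<s$. The reason: the vertices on $l_1,\ldots,l_j$ are exactly those whose rightmost descendant leaf is among the first $j$ leaves. They form a forest of full binary trees with $j$ leaves and $c\ge 1$ components, so there are $2j-c$ of them. This is the easy direction of Lemma \ref{lem:backslash}.

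This inequality makes codes \emph{suffix}-free. Suppose $b(u)$, $u\in\calT^\circ_r$, were a proper suffix of $b(v)$, $v\in\calT^\circ_s$. Then the first $s-r$ blocks of $b(v)$ would sum to $2(s-r)>2(s-r)-1$, a contradiction. Your remark that the last factor's code is the shortest suffix which is a valid code is true, but it is precisely this suffix-freeness, and it needs this argument.

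Now suppose $b(t_1)\ldots b(t_{k-1})=b(\tilde t_1)\ldots b(\tilde t_{k-1})$. Then $b(t_{k-1})$ and $b(\tilde t_{k-1})$ are suffixes of the same sequence, so one is a suffix of the other, and hence they are equal. Strip them and repeat. With this repair, parsing from the right rather than relying on a prefix-free property, your induction is complete. Equivalently, the block sums record the postorder (reverse Polish) form of the tree, which is uniquely decodable.
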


The following lemma explains which sequences of numbers (\ref{code}) determine marked full binary trees. It is not necessary for our constructions and we geve it for completeness. The point is that for this existence question only the sums
$$
  e_r = c_{r1} + \ldots + c_{r i_r}
$$
are important: presentation of $e_r$ as a sum of positive integer numbers is important only for the distribution of marks on the $r$-th backslash line. Hence it is sufficient to consider the case when $i_1=i_2=\ldots=i_m=0$.

\begin{lem}
\label{lem:backslash}
The integer numbers $e_1,\ldots,e_s$ satisfy the conditions

(1)\quad $e_j\ge 1$, $j = 1,\ldots,s-1$, $e_s\ge 2$,

(2)\quad $e_1 + \ldots + e_j \le 2j - 1$, $j = 1,\ldots,s-1$, $e_1 + \ldots + e_s = 2s - 1$

if and only if there exists a unique unmarked FBT $t$ with the backslash code $b(t) = (e_1)(e_2)\ldots (e_s)$.
\end{lem}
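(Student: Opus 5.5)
We argue by induction on $s$, using the unmarked version of Lemma \ref{lem:composition}. First a convention on the case $s=1$: the only tree is $\tau$, with code $(1)$, and we regard it as the base case (condition (1) is meant for $s\ge 2$). Call a word $(e_1)\ldots(e_s)$ \emph{admissible} if $e_j\ge 1$ for all $j$, $e_1+\ldots+e_j\le 2j-1$ for $j<s$, and $e_1+\ldots+e_s=2s-1$.

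For $s\ge 2$ admissibility already forces $e_s\ge 2$. Indeed, $e_s=1$ would give $e_1+\ldots+e_{s-1}=2s-2>2(s-1)-1$. So conditions (1)--(2) are exactly admissibility, and the statement becomes: admissible words of length $s$ are in bijection with the codes of unmarked trees in $\calT_s$. Put
$$
  f(p)=2p-(e_1+\ldots+e_p).
$$
Then $f(0)=0$, $f(p)\ge 1$ for $1\le p<s$, and $f$ increases by at most $1$ at each step, since $f(p)-f(p-1)=2-e_p\le 1$.

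\emph{Necessity.} Let $t\in\calT_s$, $s\ge2$, have basic right factorization $t=t_1t_2\ldots t_{j-1}\tau$, where $j\ge 2$ is the number of vertices on the right backslash line. By Lemma \ref{lem:composition},
$$
  b(t)=b(t_1)\ldots b(t_{j-1})\,(j).
$$
By induction each $b(t_r)$, with $t_r\in\calT_{s_r}$, is admissible, so its total is $2s_r-1$. Now check the prefix sums of the concatenation. Suppose a prefix consists of $r\ge 1$ complete blocks carrying $S$ leaves, followed by $p$ entries of the next block. Its sum is at most $2S-r+2p-1\le 2(S+p)-1$ when $p\ge1$, and equals $2S-r\le 2S-1$ when $p=0$. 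The full word has total $2(s-1)-(j-1)+j=2s-1$. All entries are positive. Hence $b(t)$ is admissible.

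\emph{Sufficiency and uniqueness.} Let $(e_1)\ldots(e_s)$ be admissible with $s\ge 2$, and put $j=e_s$. Then $f(s-1)=2(s-1)-(2s-1-j)=j-1$. Since $f$ rises by at most one per step and ends at $j-1$, for each $r=1,\ldots,j-1$ there is a last index $p_r\le s-1$ with $f(p_r)=r$. These satisfy $p_1<\ldots<p_{j-1}=s-1$, and $f(p)>r$ for all $p>p_r$. Cut $e_1\ldots e_{s-1}$ at $p_1,\ldots,p_{j-2}$. In the $r$-th block, measured relative to its start, $f$ is $\ge1$ strictly inside the block (because $f>r-1$ after $p_{r-1}$) and equals $1$ at the end. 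So each block is admissible. By induction each block is the code of a tree $t_r$, and then $t=t_1\ldots t_{j-1}\tau$ has code $(e_1)\ldots(e_s)$ by Lemma \ref{lem:composition}.

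Uniqueness of $t$ is the Corollary to Lemma \ref{lem:composition}. It can also be seen directly: in any tree with this code the block decomposition is forced. By the inequalities from the necessity part, the end of block $r$ has $f=r$, while every later position has $f\ge r+1$; hence the end of block $r$ must be exactly the last visit $p_r$.

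The main obstacle is choosing the cut points correctly. Cutting at the \emph{first} visit of $f$ to each level fails: for the code $(1)(2)(2)$ of the tree with leaves grouped as $((12)3)$, the first visit to level $1$ is at $p=1$, which is not the block boundary. Taking the \emph{last} visits is what makes both existence and the forced decomposition work.
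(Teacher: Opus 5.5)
Your proof is correct, and it takes a different route from the paper's. The paper inducts by removing the \emph{last leaf}. From an admissible word it forms the shorter word $(e_1)\ldots(e_{s-2})(e_{s-1}+e_s-2)$ and takes the tree $t'$ supplied by induction. It then performs a geometric surgery on the right backslash line of $t'$: the lower $e_{s-1}$ vertices stay on the line of leaf $s-1$, and the upper $e_s-2$ are moved onto a new right line ending at a new leaf $s$. The direction ``code of a tree $\Rightarrow$ (1)--(2)'' is simply called obvious, and uniqueness of $t$ is not argued explicitly; it really comes from the Corollary to Lemma \ref{lem:composition}.

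You instead remove the \emph{whole right spine}. The tree is then assembled for free as $t_1(t_2(\ldots(t_{j-1}\tau)))$ by Lemma \ref{lem:composition}, and all the work moves into the word: the excess function $f$ and the cut at the last visits to levels $1,\ldots,j-1$. This is a lattice-path, cycle-lemma type decomposition. The paper's route is shorter, needing one smaller tree and a local modification, but its surgery is described informally. Yours gets necessity, existence and uniqueness out of the same inequalities, with uniqueness following from the forced block boundaries independently of the Corollary.

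Your convention for $s=1$ is a genuine repair. As literally stated, condition (1) excludes the code $(1)$ of $\tau$. The paper's step ``$e_{s_0-1}=1\Rightarrow e_{s_0}>2$'' fails at $s_0=2$ for the same reason.

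One small omission: in the necessity step you should also allow $r=0$, i.e.\ a prefix lying inside the first block. That case is immediate from the admissibility of $b(t_1)$.
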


{\it Proof}. The assertion $(\Leftarrow)$ is obvious. Let us prove $(\Rightarrow)$.

We use induction in $s$. The case $s=1$ is obvious. Suppose the assertion is true for $s=s_0-1$. Consider the number
$$
  e'_{s-1} = 2(s_0-1) - 1 - e_1 - \ldots - e_{s_0-2} \le s_0 - 1.
$$
Since $e_1 + \ldots + e_{s_0} = 2s_0 - 1$, we have: $e'_{s_0-1} = e_{s_0} + e_{s_0-1} - 2$.

If $e_{s_0-1} = 1$ then $e_{s_0} > 2$ (otherwise $e_{s_0}\ge 2$). Hence $e'_{s_0-1}\ge 2$. By the induction assumption there exists a unique $t'\in\calT_{s_0-1}$ with $b(t')=(e_1)\ldots(e_{s_0-2})(e'_{s_0-1})$.

To construct $t$, $b(t) = (e_1)\ldots (e_{s_0})$ from $t'$, we remove from the right backslash segment of $t'$ the upper $k_{s_0} - 2$ vertices with their backslash branches. We replace these branches by upward continuations of slash branches. We also add another slash branch from the upper preserved vertex towards the (new) right backslash segment. Hence the right backslash segment will contain $e_{s_0} - 1$ vertices plus one more: the $s_0$-th leaf. \qed

\section{Expansion of $\Phi_m$ (continuation)}
\label{sec:cont}

In Section \ref{sec:Q} we associated with any $t\in\calT^\circ_m$ the $m$-linear operator $Q[t]$ on $\calF$.
We have the identities
\begin{eqnarray}
\label{homom}
&\!\!\!\!\!\!\!\!\!\!\!
    \big\{\calB Q[t'](G'_1,\ldots,G'_{m'}), Q[t''](G''_1,\ldots,G''_{m''})\big\}
  = Q[t' t''](G'_1,\ldots,G'_{m'}, G''_1,\ldots,G''_{m''}), & \\
\label{homom1}
&\!\!\!\!\!\!\!\!\!\!\!
    \big\{\calB Q[t'](G'_1,\ldots,G'_{m'}), \calA Q[t''](G''_1,\ldots,G''_{m''})\big\}
  = Q[t'\circ t''](G'_1,\ldots,G'_{m'}, G''_1,\ldots,G''_{m''}) &
\end{eqnarray}
for any $t'\in\calT^\circ_{m'}$ and $t''\in\calT^\circ_{m''}$.

\begin{lem}
\label{lem:muQ}
The forms $\Lambda_s$ satisfy
\begin{equation}
\label{Lambda=sum}
    \Lambda_s
  = \sum_{t\in\calT^\circ_s} \mu_t Q[t] .
\end{equation}
Suppose $t = t_k\ldots t_1$, $t_1 = \tau$ is the $\circ$-free basic right factorization\footnote
{Hence, all vertices on the right backslash side of $t$ are unmarked}.
Then the coefficient $\mu_t$ satisfies the following equation:
\begin{equation}
\label{mu}
    \mu_t
  =  \frac{\mu_{t_k} \ldots \mu_{t_1}}{(k-1)!}
    - \sum_{m = 1}^{k-1} \frac{\mu_{t_k} \ldots \mu_{t_{m+1}} \mu_{\hat t_m}}{(k+1-m)!}, \qquad
    \hat t_m
  = t_m\ldots t_1.
\end{equation}

Suppose $t = t_k\ldots t_1\circ t_0$ is the right factorization with only one $\circ$-product (the rightmost one). Then
\begin{equation}
\label{mu+}
    \mu_t
  =  \frac{\mu_{t_k} \ldots \mu_{t_1}\mu_{t_0}}{(k+1)!}
    - \sum_{m = 1}^{k-1} \frac{\mu_{t_k} \ldots \mu_{t_{m+1}} \mu_{\hat t_m}}{(k+1-m)!}, \qquad
    \hat t_m
  = t_m\ldots t_1\circ t_0.
\end{equation}

\end{lem}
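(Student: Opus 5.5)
The plan is induction on $s$, substituting the induction hypothesis into the recursion (\ref{Lambda}) of Lemma \ref{lem:Lambda} and collapsing nested brackets with the identities (\ref{homom}), (\ref{homom1}). For $s=1$ we have $\Lambda_1=I=Q[\tau]$, so $\mu_\tau=1$.

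First I rewrite (\ref{Lambda}) uniformly. A term of the first group is
$$\frac1{(k-1)!}\{\calB\Lambda_{s_k},\{\calB\Lambda_{s_{k-1}},\ldots\{\calB\Lambda_{s_2},I\}\ldots\}\}.$$
Since $I=Q[\tau]$, this has the same shape as a nested bracket of $k$ forms whose last form is $Q[\tau]$. A term of the second group is
$$\frac1{k!}\{\calB\Lambda_{s_k},\ldots\{\calB\Lambda_{s_2},(\calA-I)\Lambda_{s_1}\}\ldots\},\qquad k\ge 2 .$$
I split it into its $\calA$-part and its $-I$-part. Every $s_j$ here is smaller than $s$, so the induction hypothesis applies to each $\Lambda_{s_j}$. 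Expanding by multilinearity and applying (\ref{homom}) and (\ref{homom1}) from the inside out, each nested bracket becomes $\pm\,\mu_{t_k}\cdots\mu_{t_1}\,Q[t_k\cdots t_2\cdot t_1]$ divided by the appropriate factorial. All products are ordinary except possibly the innermost one, which is a $\circ$-product exactly when the term is the $\calA$-part. Hence $\Lambda_s=\sum_t\mu_tQ[t]$ with
$$\mu_t=\sum \pm\frac{\mu_{t_k}\cdots\mu_{t_1}}{(\cdot)!},$$
where the sum runs over all ways of writing $t$ as such a product. This proves (\ref{Lambda=sum}) and reduces the formulas for $\mu_t$ to counting representations.

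The counting uses Lemma \ref{lem:right+}. Every representation of $t$ as $t_k\cdots t_2\cdot t_1$, with the outer products ordinary, is a right factorization of $t$. Therefore it equals $t_k\cdots t_{m+1}\cdot\hat t_m$ for some $m$, where $t_k\cdots t_1$ is the unique basic right factorization. I treat the two cases of the lemma separately.

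\emph{Case 1: $t$ is $\circ$-free with basic factorization $t=t_k\cdots t_1$, $t_1=\tau$.} The $\calA$-parts produce a marked vertex on the right backslash line, so they do not contribute. The $I$-terms contribute only the representation whose innermost factor is $\tau$, namely the basic one. It has $k$ factors and coefficient $1/(k-1)!$, giving $\mu_{t_k}\cdots\mu_{t_1}/(k-1)!$. The $-I$-parts contribute one term for each representation $t_k\cdots t_{m+1}\cdot\hat t_m$ with at least two factors, i.e. $1\le m\le k-1$. Such a term has $k-m+1$ factors, hence coefficient $-1/(k+1-m)!$. The case $m=k$ is excluded because the second group starts with $2!$. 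Summing gives (\ref{mu}).

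\emph{Case 2: $t=t_k\cdots t_1\circ t_0$.} Now the $I$-terms do not contribute, because they produce only ordinary products on the right line. The $\calA$-part contributes exactly the representation with innermost factor $t_0$ attached by $\circ$. It has $k+1$ factors, giving $\mu_{t_k}\cdots\mu_{t_0}/(k+1)!$. The $-I$-parts give the representations $t_k\cdots t_{m+1}\cdot\hat t_m$ with $\hat t_m=t_m\cdots t_1\circ t_0$. Here $m\ge1$ is forced, since otherwise the innermost product would be ordinary, and $m\le k-1$ is forced by the two-factor requirement. This yields (\ref{mu+}).

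The main obstacle is the bijectivity bookkeeping. I must check three things:
\begin{itemize}
\item distinct choices of $(s_1,\ldots,s_k)$ and trees never produce the same $Q[t]$ twice, except through genuinely different right factorizations;
\item the partition $s=s_1+\cdots+s_k$ is determined by the factorization;
\item the ordering of arguments $G_j$ is preserved by (\ref{homom}) and (\ref{homom1}).
\end{itemize}
All three rest on the uniqueness part of Lemma \ref{lem:right+} and on leaf-count additivity. I would verify them carefully, and only then read off the coefficients as above.
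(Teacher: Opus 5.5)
Your proposal is correct and follows essentially the same route as the paper. The paper likewise substitutes the induction hypothesis into (\ref{Lambda}), collapses the nested brackets via (\ref{homom}) and (\ref{homom1}) into the three families of terms in (\ref{Lam_s}), and collects the coefficient of each $Q[t]$ using the description of all right factorizations in Lemma~\ref{lem:right+}, discarding the $\calA$-family when the right backslash line is unmarked and the $I$-family otherwise. The bookkeeping points you flag are routine consequences of the uniqueness in Lemma~\ref{lem:right+}.
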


{\it Proof}. By (\ref{Lambda})
\begin{eqnarray}
\nonumber
\!\!  \Lambda_s
 &\!=\!&
     \frac1{1!} \sum_{t_1\in\calT^\circ_{s-1}}\!\! \mu_{t_1} Q[t_1\tau]
   + \frac1{2!} \sum_{s_1 + s_2 = s-1}\sum_{t_1\in\calT^\circ_{s_1},\, t_2\in\calT^\circ_{s_2}}\!\!
                                       \mu_{t_2} \mu_{t_1} Q[t_2 t_1 \tau]
   + \ldots \\
\nonumber
 &&\!\!\!\!\!\!
     - \frac1{2!}\! \sum_{s_1 + s_2 = s} \sum_{t_j\in\calT^\circ_{s_j}}
                                      \!\! \mu_{t_2} \mu_{t_1} Q[t_2 t_1]
     - \frac1{3!}\! \sum_{s_1 + s_2 + s_3 = s} \sum_{t_j\in\calT^\circ_{s_j}}
                                      \!\! \mu_{t_3} \mu_{t_2} \mu_{t_1} Q[t_3 t_2 t_1]
     - \ldots \\
\label{Lam_s}
 &&\!\!\!\!\!\!
     + \frac1{2!}\! \sum_{s_0 + s_1 = s} \sum_{t_j\in\calT^\circ_{s_j}}
                                      \!\! \mu_{t_1} \mu_{t_0} Q[t_1\circ t_0]
     + \frac1{3!}\! \sum_{s_0 + s_1 + s_2 = s} \sum_{t_j\in\calT^\circ_{s_j}}
                                      \!\! \mu_{t_2} \mu_{t_1} \mu_{t_0} Q[t_2 t_1\circ t_0]
     + \ldots
\end{eqnarray}
Trees in square brackets are presented in the form of right factorizations. Right factorizations in the first line are basic while right factorizations in the last line contain only one $\circ$-product (the rightmost one).

Suppose all vertices on the right backslash line of $t$ are unmarked. Then all right factorizations of $t$ are $\circ$-free. Hence the last line in equation (\ref{Lam_s}) should be ignored. Therefore by Lemma \ref{lem:right+}
$$
     \mu_t Q[t]
  =  \frac{\mu_{t_k} \cdots \mu_{t_1}}{(k-1)!} Q[t_k\ldots t_1]
    - \sum_{m = 1}^{k-1} \frac{\mu_{t_k} \cdots \mu_{t_{m+1}} \mu_{\hat t_m}}{(k+1-m)!} Q[t_k\ldots t_{m+1}\hat t_m],
    \qquad \hat t_m = t_m\cdots t_1
$$
where $t = t_k\ldots t_1$, $t_1 = \tau$. This implies (\ref{mu}).

If the right backslash side of $t$ contains a marked vertex, then the basic right factorization of $t$ contains a $\circ$-product. Hence the first line in (\ref{Lam_s}) should be ignored. By Lemma \ref{lem:right+}
$$
     \mu_t Q[t]
  =  - \sum_{m = 1}^{k-1} \frac{\mu_{t_k} \cdots \mu_{t_{m+1}} \mu_{\hat t_m}}{(k+1-m)!}
                                   Q[t_k\ldots t_{m+1}\hat t_m]
     + \frac{\mu_{t_k} \cdots \mu_{t_1}\mu_{t_0}}{(k+1)!}
                                   Q[t_k\ldots t_1 \circ t_0],
$$
where $t = t_k\ldots t_1\circ t_0$,\, $\hat t_m = t_m\cdots t_1\circ t_0$. Now (\ref{mu+}) follows. \qed

Equations (\ref{mu}) and (\ref{mu+}) compute recursively the coefficients $\mu_t$ starting from the condition $\mu_\tau=1$. In particular, for
$t_1 =
 \begin{picture}(20,15)
            \put(0,0){\line(1,1){10}}
            \put(20,0){\line(-1,1){10}}
            \end{picture},$ \quad
$t_2 =
 \begin{picture}(20,15)
            \put(0,0){\line(1,1){10}}
            \put(20,0){\line(-1,1){10}}
            \put(10,10){$\circ$}
            \end{picture}$,

\begin{eqnarray*}
& t_3 =
 \begin{picture}(40,15)
            \put(0,0){\line(1,1){20}}
            \put(20,0){\line(1,1){10}}
            \put(40,0){\line(-1,1){20}}
            \end{picture}, \quad
  t_4 =
 \begin{picture}(40,15)
            \put(0,0){\line(1,1){20}}
            \put(20,0){\line(1,1){10}}
            \put(40,0){\line(-1,1){20}}
            \put(31,10){$\circ$}
            \end{picture}, \quad
  t_5 =
 \begin{picture}(40,15)
            \put(0,0){\line(1,1){20}}
            \put(20,0){\line(1,1){10}}
            \put(40,0){\line(-1,1){20}}
            \put(20,20){$\circ$}
            \end{picture}, \quad
  t_6 =
 \begin{picture}(40,15)
            \put(0,0){\line(1,1){20}}
            \put(20,0){\line(1,1){10}}
            \put(40,0){\line(-1,1){20}}
            \put(20,20){$\circ$}
            \put(31,10){$\circ$}
            \end{picture},  & \\
& t_7 =
 \begin{picture}(40,24)
            \put(0,0){\line(1,1){20}}
            \put(20,0){\line(-1,1){10}}
            \put(40,0){\line(-1,1){20}}
            \end{picture}, \quad
  t_8 =
 \begin{picture}(40,24)
            \put(0,0){\line(1,1){20}}
            \put(20,0){\line(-1,1){10}}
            \put(40,0){\line(-1,1){20}}
            \put(5,10){$\circ$}
            \end{picture}, \quad 
  t_9 =
 \begin{picture}(40,24)
            \put(0,0){\line(1,1){20}}
            \put(20,0){\line(-1,1){10}}
            \put(40,0){\line(-1,1){20}}
            \put(15,20){$\circ$}
            \end{picture}, \quad
  t_{10} =
 \begin{picture}(40,24)
            \put(0,0){\line(1,1){20}}
            \put(20,0){\line(-1,1){10}}
            \put(40,0){\line(-1,1){20}}
            \put(5,10){$\circ$}
            \put(15,20){$\circ$}
            \end{picture}     &
\end{eqnarray*}
we have: $\mu_{t_1} = \mu_{t_2} = 1/2$,
$$
  \mu_{t_3} = \frac1{12}, \quad
  \mu_{t_4} = -\frac1{12}, \quad
  \mu_{t_5} = \mu_{t_6} = \mu_{t_7} = \mu_{t_8} = \mu_{t_9} = \mu_{t_{10}} = \frac14.
$$

Suppose all vertices on the right backslash line of $t$ are unmarked. Let $t = t_k\ldots t_1$, $t_1=\tau$ be its basic right factorization. All the trees $\hat t_1,\ldots,\hat t_{k-1}$ in (\ref{mu}) are again such that all vertices on their right backslash lines are unmarked. We put
\begin{equation}
\label{mumu}
  \mu_t = J_k \mu_{t_k}\ldots \mu_{t_1} .
\end{equation}
Then in equation (\ref{mu}) we have: $\mu_{\hat t_m} = J_m\mu_{t_m}\ldots \mu_{t_1}$. Therefore
\begin{equation}
\label{F}
  J_1 = 1, \quad
  J_k = \frac1{(k-1)!} - \frac{J_1}{k!} - \frac{J_2}{(k-1)!} - \ldots - \frac{J_{k-1}}{2!}.
\end{equation}

Let $\bJ$ be the generating function of the sequence $\{J_j\}_{j=1,2,\ldots}$:
$$
  \bJ(x) = J_1 x + J_2 x^2 + \ldots
$$
Then by (\ref{F}) we have: $\bJ(x) (e^x - 1) = x^2 e^x$. This implies
\begin{equation}
\label{bF}
  \bJ(x) = \frac{x^2}{1 - e^{-x}}
         = x + \frac{x^2}2 + \frac{x^3}{12} - \frac{x^5}{720} + \ldots
\end{equation}
In view of the equation
$$
  \frac{x}{1 - e^{-x}} = \sum_{m=0}^\infty \frac{B_m (-x)^m}{m!},
$$
where $B_m$ is the $m$-th Bernoulli number, we obtain:
$$
  J_{k+1} = \frac{(-1)^k B_k}{k!}, \qquad  k = 0,1,\ldots
$$

Now suppose that the right backslash side of $t$ contains a marked vertex. Then $t$ admits the right factorization $t=t_k\ldots t_1\circ t_0$ with only one $\circ$-product (the rightmost one). We put
\begin{equation}
\label{mumu+}
  \mu_t = L_k  \mu_{t_k}\ldots\mu_{t_1}\mu_{t_0} .
\end{equation}
In equation (\ref{mu+}) we have: $\hat t_m = t_m\ldots t_1\circ t_0$. Hence $\mu_{\hat t_m} = L_m\mu_{t_m}\ldots\mu_{t_1}\mu_{t_0}$. Therefore
\begin{equation}
\label{F+}
  L_1 = \frac12, \quad
  L_k = \frac1{(k+1)!} - \frac{L_1}{k!} - \frac{L_2}{(k-1)!} - \ldots - \frac{L_{k-1}}{2!}, \qquad
  k\ge 2.
\end{equation}

Let $\bL$ be the generating function of the sequence $\{L_j\}_{j=2,3,\ldots}$:
$$
  \bL(x) = L_2 x^2 + L_3 x^3 + \ldots
$$
Then by (\ref{F+}) we have: $\bL(x) (e^x - 1) = (1 - x/2) e^x - 1 - x/2$. This implies
\begin{equation}
\label{bF+}
  \bL(x) = 1 - \frac{x}2 - \frac{x}{e^x - 1}
         = \sum_{m=2}^\infty \frac{B_m x^m}{m!} .
\end{equation}
Therefore
$$
  L_k = - \frac{B_k}{k!}, \qquad  k = 1,2,\ldots
$$

\begin{prop}
\label{prop:mu=FF}
Suppose
$$
    b(t)
  = (c_{11},\ldots,c_{1 i_1+1}) (c_{21},\ldots,c_{2 i_2+1}) \ldots (c_{m1},\ldots,c_{m i_m+1}) .
$$
Then
\begin{equation}
\label{mu=FF}
    \mu_t
  = L_{c_{11}},\ldots,L_{c_{1i_1}} J_{c_{1 i_1+1}}
    L_{c_{21}},\ldots,L_{c_{2i_2}} J_{c_{2 i_2+1}}
    \;\ldots \;
    L_{c_{m1}},\ldots,L_{c_{m i_m}} J_{c_{m i_m+1}}.
\end{equation}
\end{prop}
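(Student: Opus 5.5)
The plan is to prove (\ref{mu=FF}) by induction on the number of leaves of $t$, combining the recursive multiplicative formulas (\ref{mumu}), (\ref{mumu+}) with the decomposition of backslash codes in Lemma \ref{lem:composition}. The base case is $t=\tau$. Its code is $(1)$, so the right-hand side is $J_1=1=\mu_\tau$.

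For the induction step, take the basic right factorization (\ref{rf}) of $t$,
$$
  t = t'_1\ldots t'_{j_1}\circ t''_1\ldots t''_{j_2}\circ\ldots\circ t^{(s+1)}_1\ldots t^{(s+1)}_{j_{s+1}},\qquad t^{(s+1)}_{j_{s+1}}=\tau .
$$
Here the right backslash line of $t$ carries $s$ marks, and by Lemma \ref{lem:composition} its code block is $(j_1,\ldots,j_{s+1})$. The goal is the intermediate identity
$$
  \mu_t = L_{j_1}L_{j_2}\cdots L_{j_s}\,J_{j_{s+1}}\;\prod_{\text{all multipliers } t^{(r)}_i\ne \tau_{\rm last}}\mu_{t^{(r)}_i}.
$$
Once this holds, the induction hypothesis applies to each multiplier, since each has fewer leaves. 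Concatenating their codes in the order given by Lemma \ref{lem:composition} then yields exactly (\ref{mu=FF}). The last block $(j_1,\ldots,j_{s+1})$ contributes $L_{j_1}\cdots L_{j_s}J_{j_{s+1}}$, which matches the convention that the marked positions give $L$-factors and the final segment gives a $J$-factor.

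The identity itself is proved by peeling off one $\circ$ at a time. If $s=0$, it is (\ref{mumu}) with $k=j_1$. If $s\ge1$, write $t=t'_1\ldots t'_{j_1}\circ t_0$, where $t_0=t''_1\ldots\circ\ldots t^{(s+1)}_{j_{s+1}}$. This is a right factorization with exactly one (the rightmost) $\circ$-product, so (\ref{mumu+}) gives $\mu_t=L_{j_1}\mu_{t'_1}\cdots\mu_{t'_{j_1}}\mu_{t_0}$. Now $t_0$ is a proper subtree with $s-1$ marks on its right backslash line, so an inner induction on $s$ (or the outer induction on size) gives $\mu_{t_0}=L_{j_2}\cdots L_{j_s}J_{j_{s+1}}\prod\mu$. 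Substituting yields the identity.

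The main obstacle is legitimizing (\ref{mumu}) and (\ref{mumu+}). These were stated as an ansatz ("we put"), consistent with the recursions (\ref{mu}), (\ref{mu+}) once $J_k,L_k$ satisfy (\ref{F}), (\ref{F+}). I would close this gap by a simultaneous strong induction on the number of leaves. Assume both product forms hold for all smaller trees, in particular for all $\hat t_m$ in (\ref{mu}) and (\ref{mu+}). Then the right-hand sides of (\ref{mu}), (\ref{mu+}) factor as claimed precisely because $J_k$ and $L_k$ solve (\ref{F}), (\ref{F+}).

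Two details need checking in (\ref{mu+}). First, the factor of $\mu_{t_0}$ appears consistently in every term, including the term $\mu_{t_k}\ldots\mu_{t_1}\mu_{t_0}/(k+1)!$. Second, $L_1=1/2$ agrees with the initial terms, which one confirms on $t_2=\tau\circ\tau$, where $\mu_{t_2}=L_1\mu_\tau\mu_\tau=1/2$. With the product forms established, the argument above completes the proof.
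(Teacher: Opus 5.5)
Your proof is correct and follows the paper's argument. Like the paper, you induct on size, peel the right backslash line of the basic right factorization (\ref{rf}) one $\circ$ at a time via (\ref{mumu+}), close with (\ref{mumu}), and finish with Lemma \ref{lem:composition} plus the induction hypothesis on the multipliers. Your additional strong induction, which legitimizes the product formulas (\ref{mumu}) and (\ref{mumu+}) because $J_k$ and $L_k$ solve (\ref{F}) and (\ref{F+}), is left implicit in the paper and is a sound way to make it rigorous.
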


{\it Proof}. For $t=\tau\in\calT^\circ_1$ equation (\ref{mu=FF}) obviously holds. Suppose it is true for all $t\in\calT^\circ_r$, $r<r_0$. Take any $t\in\calT^\circ_{r_0}$. Let (\ref{rf}) be its basic right factorization. We put
\begin{eqnarray*}
     \tau'
 &=& t''_1\ldots t''_{j_2}
     \circ t'''_1\ldots t'''_{j_3}
     \circ \ldots
     \circ t^{(s+1)}_1\ldots t^{(s+1)}_{j_{s+1}} , \\
     \tau''
 &=& t'''_1\ldots t'''_{j_3}
     \circ \ldots
     \circ t^{(s+1)}_1\ldots t^{(s+1)}_{j_{s+1}} , \\
     \ldots
 &&  \ldots \\
     \tau^{(s)}
 &=& t^{(s+1)}_1\ldots t^{(s+1)}_{j_{s+1}} .
\end{eqnarray*}
By (\ref{mumu+}) and (\ref{mumu})
\begin{eqnarray*}
     \mu_t
 &=& L_{j_1} \mu_{t'_1} \ldots \mu_{t'_{j_1}} \mu_{\tau'}
\, =\, L_{j_1} L_{j_2} \mu_{t'_1} \ldots \mu_{t'_{j_1}}
                       \mu_{t''_1} \ldots\mu_{t''_{j_2}} \mu_{\tau''}
\, =\, \ldots \\
 &=& L_{j_1} L_{j_2}\ldots L_{j_s} J_{j_{s+1}}
       \mu_{t'_1} \ldots \mu_{t'_{j_1}} \;\ldots\;
       \mu_{t^{(s+1)}_1} \ldots\mu_{t^{(s+1)}_{j_{s+1}}}
\end{eqnarray*}

By Lemma \ref{lem:composition}
$$
    b(t)
  = b(t'_1)\ldots b(t'_{j_1})
     \; \ldots \;
    b(t^{(s+1)}_1)\ldots b(t^{(s+1)}_{j_{s+1}})
    (j_1\ldots j_{s+1}).
$$
This implies that
$   L_{j_1} L_{j_2} \ldots L_{j_s} J_{j_{s+1}}
  = L_{c_{m1}} L_{c_{m2}} \ldots L_{c_{mi_s}} J_{c_{mi_m+1}}$.
Now (\ref{mu=FF}) follows from the induction assumption. \qed

Finally we present one amazing fact concerning sum of $\mu_t$ over unmarked trees.

\begin{prop}
\label{prop:1/s}
For any $s\in\mN$
$$
  \sum_{t\in\calT_s} \mu_t = \frac1s .
$$
\end{prop}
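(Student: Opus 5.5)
The plan is to pass to the generating function of the sums in the statement. Put
$$
  \sigma_s = \sum_{t\in\calT_s}\mu_t, \qquad G(x) = \sum_{s\ge1}\sigma_s x^s .
$$
The claim $\sigma_s = 1/s$ is then equivalent to $G(x) = -\log(1-x)$. The aim is to derive a functional equation for $G$ from the multiplicative structure (\ref{mumu}) and solve it using the closed form (\ref{bF}) of $\bJ$.

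First, describe unmarked trees through their basic right factorization. By Lemma \ref{lem:right+}, every $t\in\calT_s$ has a unique basic right factorization $t = t_k\ldots t_2\tau$. Here $k\ge1$ is the number of vertices on the right backslash line, and $t_2,\ldots,t_k$ are unmarked FBT's with $s_2+\ldots+s_k = s-1$ leaves in total. Conversely, any $k\ge 1$ and any ordered tuple of unmarked trees $t_2,\ldots,t_k$ give, through the ordinary product, an unmarked tree with $1+\sum s_j$ leaves. So the map $t\mapsto (k;t_k,\ldots,t_2)$ is a bijection onto such tuples.

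All vertices on the right backslash line of an unmarked tree are unmarked, so (\ref{mumu}) applies. With $\mu_\tau=1$ it gives $\mu_t = J_k\,\mu_{t_k}\cdots\mu_{t_2}$. Summing over the bijection, the product factorizes and yields
$$
  G(x) = \sum_{k\ge1} J_k\, x\, G(x)^{k-1} = \frac{x\,\bJ(G(x))}{G(x)} .
$$
This composition is legitimate as formal series, because $G = x + O(x^2)$ and $\bJ(y) = y + O(y^2)$. Substituting (\ref{bF}), $\bJ(y)=y^2/(1-e^{-y})$, turns the equation into $G = xG/(1-e^{-G})$, that is,
$$
  1 - e^{-G(x)} = x .
$$

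Finally, I check that this equation determines $G$. The relation $G=\sum_k J_k x G^{k-1}$ expresses $\sigma_s$ through $\sigma_1,\ldots,\sigma_{s-1}$, so $G$ is the unique formal series with zero constant term satisfying it. The series $-\log(1-x)$ satisfies $1-e^{-G}=x$ and has zero constant term. Therefore $G(x) = -\log(1-x) = \sum_{s\ge1} x^s/s$, which gives $\sigma_s = 1/s$.

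The only delicate point is the bijection step. I must make sure that the basic right factorization of an unmarked tree involves only ordinary products, and that every tuple arises exactly once; both follow from Lemma \ref{lem:right+}. I must also make sure that the case $k=1$ (the tree $\tau$) is included with weight $J_1=1$, which gives the $x$ term of $G$. Everything else is formal-series manipulation.
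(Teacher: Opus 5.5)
Your proposal is correct and follows the paper's proof: both split an unmarked tree along its right backslash line (equivalently, its basic right factorization) and use $\mu_t = J_k\mu_{t_k}\cdots\mu_{t_2}$ to obtain $G = x\,\bJ(G)/G$. Both then substitute $\bJ(y)=y^2/(1-e^{-y})$ to get $1-e^{-G}=x$, i.e.\ $G=-\ln(1-x)$; your explicit uniqueness argument for the formal solution fills in a step the paper leaves implicit.
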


{\it Proof}. We put $N_s = \sum_{t\in\calT_s} \mu_t$ and define $\bN(x) = N_1 x + N_2 x^2 + \ldots$ Obviously $N_1 = 1$.

Let $\calT_s(j)\subset\calT_s$, $j = 2,\ldots,s$ be the set of FBT's with backslash codes
$(k_1)\ldots (k_s)$, $k_s = j$. If we remove from a tree $t\in\calT_s(j)$ all vertices, lying on the right backslash line and edges, issuing from these vertices, the tree is crumbled to $j-1$ trees $t_1\in\calT_{s_1}$, \ldots $t_{j-1}\in\calT_{s_{j-1}}$, $s_1 + \ldots, s_{j-1} = s-1$. Hence
$$
     N_s
  =  J_2 N_{s-1} + J_3 \sum_{k_1 + k_2 = s-1} N_{k_1} N_{k_2}
               + J_4 \sum_{k_1 + k_2 + k_3 = s-1} N_{k_1} N_{k_2} N_{k_3} + \ldots + J_s N_1^{s-1}.
$$
The r.-h.s. of this equation is the $(s-1)$-st Taylor coefficient of the function, obtained as a result of substitution of $\bN$ into $\bJ(x)/x-1$. The l.-h.s. is the $(s-1)$-st Taylor coefficient of $\bN(x)/x - 1$.
Hence we have the equation
$$
    \frac{\bN(x)}x - 1
  = \frac{\bJ(\bN)}\bN - 1.
$$
By (\ref{bF}) it is equivalent to $\bN(x) / x = \bN(x) / (1 - e^{-\bN(x)})$.
This implies $\bN = - \ln(1-x) = x + x^2/2 + x^3/3 + \ldots$  \qed

\end{document}